				\newtheorem{thm}{Theorem}[section]
				\newtheorem{lem}[thm]{Lemma}
				\newtheorem{propos}[thm]{Proposition}
				\newcommand{\enter}{\bigskip}
\begin{document}
\thispagestyle{empty}
\author{Sanjiv Kumar Bariwal ${}^1$	\footnote{{\it{${}$ Corresponding author. \, Email address:}} p20190043@pilani.bits-pilani.ac.in}	, Rajesh Kumar ${}^2$\footnote{{\it{${}$ Email address:}} rajesh.kumar@pilani.bits-pilani.ac.in}\\
\footnotesize ${}^{1,2}$Department of Mathematics, Birla Institute of Technology and Science Pilani,\\ \small{ Pilani Campus, Rajasthan-333031, India}\\
}
	\date{}																		
\title{{ Weak convergence analysis for non-linear collisional induced breakage equation with singular kernel}}	
									
\maketitle
										
\begin{quote}
{\small {\em\bf Abstract}}: The phenomenon of collisional breakage in particulate processes has garnered significant interest due to its wide-ranging applications in fields such as milling, astrophysics, and disk formation. This study investigates the analysis of the pure collisional breakage equation (CBE), characterized by its nonlinear nature with presence of locally bounded collision kernels and singular breakage kernels. Employing a finite volume scheme (FVS), we discretize the continuous equation and investigate the weak convergence of the approximated solution of the conservative scheme towards the continuous solution of CBE. A weight function is introduced to ensure the conservation of the scheme. The non-negativity of the approximated solutions is also shown with the assistance of the mathematical induction approach. Our approach relies on the weak 
$L^1$ compactness argument, complemented by introducing a stable condition on the time step.

 \end{quote}
 
\textbf{Keywords}: Collisional breakage, Finite volume scheme, Weak $L^1$ compactness method, Singular kernel.\\
{\bf{Mathematics Subject Classification (2020) 45L05, 45K05, 65R10}}\\

\section{Introduction}
Particulate processes like aggregation (coagulation) and fragmentation (breakage) play a crucial role in the dynamics of particle evolution, illustrating how particles may either combine to form larger aggregates or break apart into smaller fragments. A particle is typically characterized by a single size variable, such as its volume or mass.  The non-linear nature of the breakage process arises from collisions between parent particles and is often referred to as the non-linear collision-induced breakage equation \cite{cheng1988scaling}. This nonlinearity lends itself to significant applications across various fields such as chemical engineering, cloud formation, and planet formation. In scenarios where a cluster breaks apart spontaneously into smaller clusters, it is described by the linear breakage equation. However, in the case of collisions between clusters, some of the matter from one cluster can be transferred to the other, leading to the creation of larger clusters. This phenomenon exemplifies the non-linear collision dynamics between particles.
Non-linear models are integral to various fields, such as the simulation of milling and crushing processes, the distribution patterns of asteroids, and the behavior of particles in fluidized beds, see \cite{chen2020collision,yano2016explosive,lee2017development}.\\

The concentration function  $c(\varsigma,\epsilon)\geq 0$ of particles of volume $\epsilon \in ]0,\infty[$ at time $\varsigma\geq 0$ is portrayed by the following non-linear CBE
\begin{align}\label{maineq}
\frac{\partial{c(\varsigma,\epsilon)}}{\partial \varsigma}=  \int_0^\infty\int_{\epsilon}^{\infty} K(\rho,\sigma)b(\epsilon,\rho,\sigma)c(\varsigma,\rho)c(\varsigma,\sigma)\,d\rho\,d\sigma -\int_{0}^{\infty}K(\epsilon,\rho)c(\varsigma,\epsilon)c(\varsigma,\rho)\,d\rho
\end{align}
with initial datum
\begin{align}\label{initial}
c(0,\epsilon)\ \ = \ \ c^{in}(\epsilon) \geq 0.
\end{align}
The first integral entity in Eq.(\ref{maineq}) represents the creation of particles with volume $ \epsilon$ resulting from the collisional breakage between particles with volumes $\rho$ and 
$\sigma$, commonly referred to  the birth term. The second integral component, known as the death term, accounts for the reduction of particles with volume $\epsilon$ due to interactions with particles of volume $\rho$. In Eq.(\ref{maineq}), the collision kernel $K(\epsilon,\rho)$ quantifies the frequency of collisions between particles of volumes $\epsilon$ and $\rho$. Typically, $K(\epsilon,\rho)$ is both non-negative and symmetric, implying that $K(\epsilon,\rho) = K(\rho,\epsilon)$. The breakage distribution function $b(\epsilon,\rho,\sigma)$ delineates the rate at which particles of volume $\epsilon$ are generated from the fragmentation of a particle of volume $\rho$ as a result of its collision with a particle of volume $\sigma$. The function $b$ holds
$$b(\epsilon,\rho,\sigma)\neq 0\,\,\, \text{for} \hspace{0.4cm} \epsilon \in (0,\rho)\,\,\, \text{and} \hspace{0.4cm} b(\epsilon,\rho,\sigma)=0 \,\,\,\text{for} \hspace{0.4cm} \epsilon> \rho,$$
 and volume conservation law 
 \begin{align}\label{breakageconservation}
 \int_{0}^{\rho}\epsilon b(\epsilon,\rho,\sigma)d\epsilon=\rho,
 \end{align}
 with the total particles after breakage of particle size $\rho$
 \begin{align}\label{breakageparticle}
  \int_{0}^{\rho}b(\epsilon,\rho,\sigma)d\epsilon=\mu (\rho)\leq N <\infty.
  \end{align}
  Eq.(\ref{breakageconservation}) has  the property of the local volume conservation during each collisional breakage event (total volume of the daughter particles of size $\epsilon$ is equal to size of parent particle $\rho$). 
Let us define the $i^{th}$ moment for the solution of the Eq.(\ref{maineq}) as
\begin{align}\label{moment}
M_{j}(\varsigma)=\int_{0}^{\infty}{\epsilon}^{j}c(\varsigma,\epsilon)\,d\epsilon.
\end{align}
In the above  Eq.(\ref{moment}), the zeroth $(j=0)$ and first  $(j=1)$ moments stand for the total number of particles in the system and its total volume, respectively.
  The assumption  $M_{1}^{in} < \infty$ is taken which is the initial particle volume in the closed particulate system.\\
  
  \section{Literature and Motivation}
Let's review the literature on the linear breakage equation, focusing on the studies that have explored the existence and uniqueness of solutions and the derivation of similarity solutions for various simplified breakage kernels, see \cite{ziff1991new,ziff1985kinetics,peterson1986similarity,breschi2017note} and the references therein. To solve the linear breakage equation for different kinds of physical breakage kernels, several numerical schemes were developed: Monte-carlo \cite{chen2022reconstruction}, sectional method \cite{kostoglou2009sectional}, method of moments \cite{falola2013extended}, finite volume method (FVM) \cite{kumar2013numerical,singh2022finite} and references therein.


Further, the authors \cite{bourgade2008convergence} investigated the weak convergence analysis (based on weak $L^1$ compactness technique) of FVS for  binary coagulation-breakage equation for locally bounded kernels and error estimation of approximated solution for uniform meshes. The analysis has been extended for binary coagulation and multiple breakage equation having the singular breakage kernel using the weighted FVM discretization scheme, see \cite{bariwal2023convergence}. The mathematical results on stability and convergence analysis of FVM for coupled coagulation and multiple breakage equation are discovered and numerically verified using four different kind of meshes in \cite{Rajesh2014}. \\

The pioneering research conducted by Cheng and Redner \cite{cheng1988scaling} delved into scaling solutions of the rate equation, elucidating the distinctions between linear breakage \cite{ziff1985kinetics} and non-linear (\ref{maineq}) processes. 
In their investigation, the researchers employed a widely recognized scaling solution for the concentration function, given by $c(\varsigma, \epsilon) \sim s^{-2} \varphi(\epsilon / s)$, where $s$ denotes the characteristic cluster mass, defined as $s = \varsigma^{-1 / \lambda}$ for $\lambda > 0$. The study by Cheng \cite{cheng1990kinetics} examined the asymptotic behavior of models where a two-particle collision results in either: (1) both particles fragmenting into two equal parts, (2) only the larger particle splitting into two, or (3) only the smaller particle breaking apart. Subsequently, Krapivsky  \cite{krapivsky2003shattering} explored the shattering phenomenon, particularly focusing on scenario (2), where particles transform into dust particles due to a discontinuous transition. Additionally, scenario (3) involves a continuous transition with the dust mass increasing steadily from the fragments.
 The study by Kostoglou et al. \cite{kostoglou2000study} provided analytical solutions for two specific scenarios: $K(\epsilon,\rho) = 1$ with $b(\epsilon,\rho, \sigma) = 2/\rho$, and $K(\epsilon,\rho) = \epsilon\rho$ with $b(\epsilon,\rho, \sigma) = 2/\rho$, using a mono-disperse initial condition $\delta(x-1)$. They also explored self-similar solutions for the sum kernel $K(\epsilon,\rho) = {\epsilon}^{\omega} + {\rho}^{\omega}$ and the product kernel $K(\epsilon,\rho) = {\epsilon}^{\omega} {\rho}^{\omega}$, where $\omega > 0$. Additionally, Barik et al. \cite{barik2020global} examined the existence of classical solutions  of coagulation and CBE under collision kernels that increase indefinitely for large volumes and a binary breakage distribution function. In a related study, Giri et al. \cite{giri2021existence} discussed the existence and non-existence of mass-conserving weak solutions for specific collision kernels.
 
In their study, Giri et al. \cite{giri2021weak} explored the global existence of solutions for collision kernels of the form $K(\epsilon,\rho)={\epsilon}^{\alpha}{\rho}^{\beta}+{\epsilon}^{\beta}{\rho}^{\alpha}$. They focused on kernels exhibiting singularities when $\alpha <0$ and those that are locally bounded when $\alpha=0$ for small volumes. Few series form schemes called semi-analytical methods are implemented on CBE to compute the truncated approximate solutions for some physical kernels, and convergence analysis of the series solutions was executed for a particular set of kernels, see \cite{bariwal2024non,hussain2024collisional}.  \\

Currently, in the numerical sense, various researchers have identified the FVM as a highly effective numerical technique for addressing aggregation and breakage equations, largely due to its ability to conserve mass accurately. Therefore, FVM is presented for collision-induced breakage equation to analyze the concentration function.


 As far as the author knows, previous research has yet to consider the weak convergence of the numerical approach (FVM) used to solve the CBE. In order to solve the model with locally bounded collision and a class of singular breakage kernels, i.e., breakage functions are attaining the singularity near zero, this work attempts to analyze the weak convergence of the FVM for the non-uniform mesh. Under a stability condition on time step, we prove the non-negativity, equiboundedess and equiintegrability of the sequence of the approximated solutions. The weak $L^1$ compactness technique is used in the proof.\\

Prioritizing the functional setting is the first step towards moving forward, considering that the initial mass conservation attribute in (\ref{moment}) is required. At time $\varsigma=0$, the total number of particles in the system must be finite. Consequently, we create the solution space that shows how the discretized numerical solution of the CBE converges to the weak solution of equation (\ref{maineq}), which is identified as a  weighted $L^1$ space that was not considered in \cite{bourgade2008convergence}. 
Let us suppose that the initial function $c^{in}$ satisfies    \begin{align}
c^{in}\in \mathbb{S}^{+},
\end{align}
where $\mathbb{S}^{+}$ is the positive cone of the Banach space 
	\begin{align*}
		\mathbb{S} = \{c\in L^{1}(\mathbb{R}^{+})\cap L^1(\mathbb{R}^{+}, \Big(\epsilon^r+\frac{1}{\epsilon^{2p}}\Big)\,d\epsilon): c\geq 0, \|c\|< \infty\},
	\end{align*}
consider the expression $\|c\|= \int_{0}^{\infty}\Big(\epsilon^r+\frac{1}{\epsilon^{2p}}\Big)c(\epsilon)\,d\epsilon,$ $ p\geq 0, r \in[1,u)$, with $u$ being a finite value. The term $L^1({\mathbb{R}}^{+}, \Big(\epsilon^r+\frac{1}{\epsilon^{2p}}\Big)d\epsilon)$ denotes the space of Lebesgue measurable real-valued functions on $\mathbb{R}^{+}$ that are integrable with respect to the measure $\Big(\epsilon^r+\frac{1}{\epsilon^{2p}}\Big)\, d\epsilon.$\\
	
Next, the properties of the collisional kernel $K$
 and the breakage distribution function $b$ are defined as follows:  There exist  $\zeta, \eta$  with $0<\zeta \leq \eta \leq 1, $ and $\alpha\geq 0$ such that 	
\begin{align}\label{Collisional func}
H1: \hspace{0.2cm} K(\epsilon,\rho)=
\alpha(\epsilon^{\zeta}\rho^{\eta}+\epsilon^{\eta}\rho^{\zeta}),  \quad (\epsilon,\rho)\in ]0,\infty[\times ]0,\infty[. 
\end{align}
\begin{align}\label{breakage funcn}
H2:  \hspace{0.2cm}  \int_{0}^{\rho}\epsilon^{-\upsilon p}{b(\epsilon,\rho,\sigma)}^{\tau}d\epsilon \leq Q\rho^{-\upsilon p+1-\tau}, \upsilon >0, \tau \in[1,2),
\end{align}
where $Q$ is a positive constant and integral (\ref{breakage funcn}) contains  the breakage kernels having the sinularity near to zero. Few examples those holds Eq.(\ref{breakage funcn}), i.e.,  $b(\epsilon,\rho,\sigma)=\frac{(h+1){\epsilon}^{h}}{{\rho}^{1+h}}$, for $-1<h\leq 0$ and with relation of $\tau=1,\,\, \upsilon p-h<1$.\\

The structure of this article is as follows: Section \ref{scheme} outlines the discretization technique based on the FVM and the volume-conservative form of the fully discretized CBE. Next, Section \ref{convergence} probes into an in-depth convergence analysis. Finally, Section \ref{conclusion} provides a summary and concluding remarks.
\section{Conservative Numerical Scheme}\label{scheme}
To begin exploring the FVM for solving Eq.(\ref{maineq}), we start by dividing the spatial domain into small grid volumes.
  For practical reasons, however, we prescribe the particle volumes in a specific domain, lets say $]0, R]$ where  $R$ is a finite quantity. So, the following equation is the truncated form of the CBE
	\begin{align}\label{trunceq}
	\frac{\partial{c(\varsigma,\epsilon)}}{\partial \varsigma}=  \int_0^R\int_{\epsilon}^{R} K(\rho,\sigma)b(\epsilon,\rho,\sigma)c(\varsigma,\rho)c(\varsigma,\sigma)\,d\rho\,d\sigma -\int_{0}^{R}K(\epsilon,\rho)c(\varsigma,\epsilon)c(\varsigma,\rho)\,d\rho
	\end{align}
	subject to 
	\begin{align}\label{initialtrun}
	c(0,\epsilon)\ \ = \ \ c^{in}(\epsilon) \geq 0, \ \ \ \epsilon \in ]0,R].
	\end{align}
Let us divide the domain into small grid volumes denoted as $\Lambda_i^h:=]\epsilon_{i-1/2}, \epsilon_{i+1/2}]$, where $i$ ranges from $1$ to $\mathrm{I}$. Here, $\epsilon_{1/2}=0$, $\epsilon_{\mathrm{I}+1/2}= R$, and $\Delta \epsilon_i$ is the difference between $\epsilon_{i+1/2}$ and $\epsilon_{i-1/2}$. The midpoint of the volumes is called grid point as $\epsilon_i$ for all $i$. The value of $h$ is determined as the maximum $\Delta \epsilon_i$ for all $i$. The below expression is the average value of the concentration function $c_i(t)$ in the $i^{th}$ volume, which is expressed as:	
		\begin{align}\label{meandens}
							c_{i}(\varsigma)=\frac{1}{\Delta \epsilon_i}\int_{\epsilon_{i-1/2}}^{\epsilon_{i+1/2}}c(\varsigma,\epsilon)\,d\epsilon.
						\end{align}
The time parameter domain is restricted to the range [0, T], discretized into $N$ intervals with time step $\Delta \varsigma$. The interval is characterized as 		
$$  t_n=[\varsigma_n,\varsigma_{n+1}[, \,\,  n=0,1,\ldots,N-1.$$ 
In commencing our endeavor to devise a scheme applicable to non-uniform meshes, we encounter a notable advantage: the potential to accommodate a broader domain while employing fewer mesh points compared to the traditional uniform mesh. This strategic choice promises enhanced computational efficiency and resource utilization, a significant consideration in numerical modeling and simulation.
Diverging slightly from the methodology elucidated by Bourgade and Filbet in their seminal work \cite{bourgade2008convergence}, wherein they transformed the model (coagulation and linear breakage equation) into a conservative equation using Leibniz's integral rule before proceeding with the discretization using FVM, we adopt a different approach in our present study. Here we implement FVM directly from the continuous equation (\ref{maineq}). \\

The steps we take to obtain the discretized form of the CBE (\ref{trunceq}) are as follows: The semi-discrete form is obtained by integrating Eq.(\ref{trunceq}) over $i^{th}$ cell with regard to $x$ as
\begin{align}\label{semi}
\frac{dc_i}{d\varsigma}=B_{C}(i)-D_{C}(i),
\end{align}
where
\begin{align*}
B_{C}(i)=\frac{1}{\Delta \epsilon_i}\int_{\epsilon_{i-1/2}}^{\epsilon_{i+1/2}}\int_0^{\epsilon_{\mathrm{I}+1/2}}\int_{\epsilon}^{\epsilon_{\mathrm{I}+1/2}} K(\rho,\sigma)b(\epsilon,\rho,\sigma)c(\varsigma,\rho)c(\varsigma,\sigma)d\rho\,d\sigma\,d\epsilon
\end{align*}
and
\begin{align*}
D_{C}(i)= \frac{1}{\Delta \epsilon_i}\int_{\epsilon_{i-1/2}}^{\epsilon_{i+1/2}}\int_0^{\epsilon_{\mathrm{I}+1/2}} K(\epsilon,\rho)c(\varsigma,\epsilon)c(\varsigma,\rho)d\rho\,d\epsilon
\end{align*}
along with initial distribution,
\begin{align}
c_{i}(0)=c_{i}^{in}=\frac{1}{\Delta \epsilon_i}\int_{\epsilon_{i-1/2}}^{\epsilon_{i+1/2}}c_{0}(\epsilon)\,d\epsilon.
\end{align}	
After making a few simplifications, the semi-discrete equation may be obtained by applying the midpoint rule to each of the aforementioned representations as 
\begin{align}\label{semidiscrete}
\frac{dc_{i}}{d\varsigma}=&\frac{1}{\Delta \epsilon_i}\sum_{l=1}^{\mathrm{I}}\sum_{j=i}^{\mathrm{I}}K_{j,l}c_{j}(\varsigma)c_{l}(\varsigma)\Delta \epsilon_{j}\Delta \epsilon_{l}\int_{\epsilon_{i-1/2}}^{\epsilon_{i+1/2}^{j}}b(\epsilon,\epsilon_{j},\epsilon_{l})\,d\epsilon-\sum_{j=1}^{\mathrm{I}}K_{i,j}c_{i}(\varsigma)c_{j}(\varsigma)\Delta \epsilon_{j}, 
\end{align}
where the integration limit term $\epsilon_{i+1/2}^{j}$ is expressed by
\begin{equation}
\epsilon_{i+1/2}^{j} =
\begin{cases}
\epsilon_{i}, & \text{if }\,j=i \\
\epsilon_{i+1/2}, & j\neq i.							
\end{cases}
\end{equation}
 The volume conservation law ($\frac{d}{d\varsigma}\sum_{i=1}^{I}\epsilon_i c_i \Delta \epsilon_i=0$) is not satisfied by the semi-discrete formulation (\ref{semidiscrete}). Therefore, a weight function $\Lambda_{ji}$ \cite{paul2023moments} is introduced in the death term of Eq.(\ref{semidiscrete}) to  conserve the total mass of the system and  a new form of the equation  becomes
\begin{align}\label{semidiscrete1}
\frac{dc_{i}}{d\varsigma}=&\frac{1}{\Delta \epsilon_i}\sum_{l=1}^{\mathrm{I}}\sum_{j=i}^{\mathrm{I}}K_{j,l}c_{j}(\varsigma)c_{l}(\varsigma)\Delta \epsilon_{j}\Delta \epsilon_{l}\int_{\epsilon_{i-1/2}}^{\epsilon_{i+1/2}^{j}}b(\epsilon,\epsilon_{j},\epsilon_{l})\,d\epsilon-\sum_{j=1}^{\mathrm{I}}K_{i,j}c_{i}(\varsigma)c_{j}(\varsigma)\Delta \epsilon_{j}\Lambda_{ji}, 
\end{align}
where 
\begin{align}\label{Lambda}
\Lambda_{ji}=\frac{\sum_{l=1}^{i}\epsilon_l\int_{\epsilon_{l-1/2}}^{\epsilon_{l+1/2}^{i}}b(\epsilon,\epsilon_{i},\epsilon_{j})d\epsilon}{\epsilon_i}, \quad i,j \in \{1,2,\ldots, I\}.
\end{align}
Applying explicit Euler discretization to time variable $\varsigma$  results in a completely discrete system
\begin{align}\label{fully}
c_{i}^{n+1}-c_{i}^{n}=\frac{\Delta \varsigma}{\Delta \epsilon_{i}}\sum_{l=1}^{\mathrm{I}}\sum_{j=i}^{\mathrm{I}}K_{j,l}c_{j}^{n}c_{l}^{n}\Delta \epsilon_{j}\Delta \epsilon_{l}\int_{\epsilon_{i-1/2}}^{\epsilon_{i+1/2}^{j}}b(\epsilon,\epsilon_{j},\epsilon_{l})\,d\epsilon 
-\Delta \varsigma \sum_{j=1}^{\mathrm{I}}K_{i,j}c_{i}^{n}c_{j}^{n}\Delta \epsilon_{j}\Lambda_{ji}.
\end{align}
Take into consideration a function $c^h$ on $[0, T]\times ]0,R]$ for the convergence analysis. It is represented by
\begin{align}\label{chap2:function_ch}
c^h(\varsigma,\epsilon)=\sum_{n=0}^{N-1}\sum_{i=1}^{\mathrm{I}}c_i^n\,\chi_{\Lambda_i^h}(\epsilon)\,\chi_{t_n}(\varsigma),
\end{align}
having the characteristic function as $\chi_D(\epsilon)=1$ if $\epsilon\in D$ or $0$ everywhere else. The aforementioned approximation $c^h$ implies that the constant representatives are concentrated within the cell. Also noting that $$c^h(0,\cdot)=\sum_{i=1}^{\mathrm{I}}c_i^{in} \chi_{\Lambda_i^h}(\cdot) \rightarrow c^{in} \in L^1 ]0,R[\,\, \text{as} \,\, h\rightarrow 0.$$ 
The collision kernel $K$ and breakage kernel $b$  are discretized using the finite volume approximation as $K^h$ and $b^h$  on the each cell, i.e., for all $\epsilon,\rho,\sigma \in ]0,R]$,
\begin{align}\label{chap2:function_aggregatediscrete}
	K^h(\epsilon,\rho)= \sum_{i=1}^{\mathrm{I}} \sum_{j=1}^{\mathrm{I}} K_{i,j} \chi_{\Lambda_i^h}(\epsilon) \chi_{\Lambda_j^h}(\rho),
\end{align}
and
\begin{align}\label{chap2:function_brkdiscrete}
	b^h(\epsilon,\rho,\sigma)= \sum_{i=1}^{\mathrm{I}} \sum_{j=1}^{\mathrm{I}}\sum_{l=1}^{\mathrm{I}} b_{i,j,l} \chi_{\Lambda_i^h}(\epsilon) \chi_{\Lambda_j^h}(\rho)\chi_{\Lambda_l^h}(\sigma),
	\end{align}

where $$K_{i,j}= \frac{1}{\Delta \epsilon_i \Delta \epsilon_j} \int_{\Lambda_j^h} \int_{\Lambda_i^h} K(\epsilon,\rho)d\epsilon\,d\rho, \quad  b_{i,j,l}= \frac{1}{\Delta \epsilon_i \Delta \epsilon_j \Delta \epsilon_l}\int_{\Lambda_l^h} \int_{\Lambda_j^h} \int_{\Lambda_i^h} b(\epsilon,\rho,\sigma)d\epsilon\,d\rho\,d\sigma.$$
The above formulation guarantees that $K^h \rightarrow K \in L^1$ and $b^h \rightarrow b \in L^1 $ as $h\rightarrow 0$, see \cite{bourgade2008convergence}.

\section{Estimation of Weak Solution}\label{convergence}
Our aim here is to investigate the convergence behavior of the solution $c^h$ towards a function $c$
 as the parameters $h$ and $\Delta \varsigma$
 tend towards zero.

\begin{thm}\label{maintheorem}
Assume that the kernel hypothesis $(H1)-(H2)$ holds and  $c^{in}\in \mathbb{S}^+$. Moreover, if the stability condition 
		\begin{align}\label{22}
		C(R, T)\Delta \varsigma\le \Theta< 1,
		\end{align}
	for a constant $\Theta> 0$ and time step $\Delta \varsigma$	holds for 
		\begin{align}{\label{23}}
	C(R, T):= 2\alpha N\|c^{in}\|_{\mathbb{S}}\,e^{2\alpha N M_{1}^{in} T} (R + M_{1}^{in}),
		\end{align}
	then there is a sub-sequence as $$c^h\rightarrow c\ \
		\text{in}\ \ L^\infty([0,T];L^1\,]0,R[),$$
	where c is the weak solution of problem (\ref{maineq}-\ref{initial}). This means that the function $c\geq 0$ fulfils the given condition
		\begin{align}\label{convergence0}
	\int_0^T &\int_0^{R} c(\varsigma,\epsilon)\frac{\partial\varphi}{\partial
	\varsigma}(\varsigma,\epsilon)d\epsilon\,d\varsigma -\int_0^T\int_0^{R} \int_0^{R} \int_{\epsilon}^{R}\varphi(\varsigma,\epsilon)K(\rho,\sigma)b(\epsilon,\rho,\sigma)c(\varsigma,\rho)c(\varsigma,\sigma)d\rho\,d\sigma\,d\epsilon\,d\varsigma\nonumber\\
	&+\int_0^{R} c^{in}(\epsilon)\varphi(0,\epsilon)d\epsilon+ \int_0^T \int_0^{R} \int_{0}^{R}\varphi(\varsigma,\epsilon)K(\epsilon,\rho)c(\varsigma,\epsilon)c(\varsigma,\rho)d\rho\,d\epsilon\,d\varsigma
		=0,
						\end{align}
						where $\varphi$ is compactly supported smooth functions on $[0,T]\times ]0,{R}].$
					\end{thm}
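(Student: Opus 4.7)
The overall strategy is the weak $L^1$ compactness method of Bourgade--Filbet \cite{bourgade2008convergence}, adapted to the nonlinear, collisional setting and to the singular breakage kernels admitted by (H2). The four steps I would carry out in order are: (i) non-negativity of $c_i^n$ under the CFL-type condition \eqref{22}; (ii) a uniform a priori bound for $c^h$ in the weighted space $\mathbb{S}$; (iii) equiintegrability of $\{c^h(\varsigma,\cdot)\}$ at each time via a De la Vall\'ee Poussin / Dunford--Pettis argument; and (iv) time equicontinuity of $c^h$, upgrading weak compactness at each $\varsigma$ to convergence in $L^\infty([0,T];L^1(]0,R[))$. Once a limit $c$ is extracted, it must be identified as a weak solution by passing to the limit in a discrete version of \eqref{convergence0}.

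Non-negativity is by induction on $n$: rewriting \eqref{fully} as
\begin{align*}
c_i^{n+1} = c_i^n \Bigl(1 - \Delta\varsigma \sum_{j=1}^{\mathrm{I}} K_{i,j}\,c_j^n \,\Lambda_{ji}\, \Delta\epsilon_j \Bigr) + (\text{birth contribution} \geq 0),
\end{align*}
the stability condition \eqref{22} together with an inductive $\mathbb{S}$-norm bound keeps the bracket in $[0,1]$. For the a priori bound I would multiply the scheme by the discrete weight $\epsilon_i^r + \epsilon_i^{-2p}$, sum over $i$, use (H1) to bound $K^h$ by $\alpha(\epsilon^\zeta + \epsilon^\eta)(\rho^\zeta + \rho^\eta)$ and (H2) with $\tau=1$ to absorb $\int \epsilon^{-\upsilon p} b\, d\epsilon$ on the birth side. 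A discrete Gronwall inequality then yields $\|c^h(\varsigma,\cdot)\|_{\mathbb{S}} \leq \|c^{in}\|_{\mathbb{S}}\, e^{2\alpha N M_1^{in} T}$, uniform in $h$, which simultaneously closes the inductive hypothesis on non-negativity.

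For the compactness step I would apply the refined De la Vall\'ee Poussin lemma to $c^{in}$ to produce a superlinear convex $\Phi$ with $\int_0^R \Phi(c^{in})\, d\epsilon < \infty$, and repeat the previous weighted estimate with $\Phi'(c_i^n)$ in place of the linear weight so as to propagate this integral uniformly in $\varsigma$ and $h$. Time equicontinuity is obtained by estimating $\int_0^R |c^h(\varsigma_2,\cdot)-c^h(\varsigma_1,\cdot)|\, d\epsilon$ directly from \eqref{fully}, which using the bounds just proved is at most $C(R,T)|\varsigma_2-\varsigma_1|$. A Vitali-type argument, or equivalently Dunford--Pettis plus the time-continuity modulus, then delivers a subsequence converging in $L^\infty([0,T];L^1(]0,R[))$ to some $c\geq 0$.

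Finally, to verify \eqref{convergence0} I would test the fully discrete equation against a cell-and-time average of a smooth $\varphi$, sum by parts in $n$ (producing a discrete $\partial_\varsigma \varphi$), and recognise the remaining sums as Riemann approximations of the three double integrals in the weak formulation; the strong convergences $K^h\to K$ and $b^h\to b$ together with $\varphi^h\to\varphi$ then close the passage to the limit. The main obstacle I anticipate is the birth term, which is bilinear in $c^h$: a product of two weakly $L^1$ convergent sequences need not converge weakly. I would handle this by splitting the $\epsilon$-integration into $]0,\delta]$, where the $\epsilon^{-2p}$ weight in $\mathbb{S}$ makes the contribution uniformly small, and $[\delta,R]$, where (H1) is locally bounded and the discrete kernel $K^h b^h$ acts as a bounded multiplier against which the weak convergence of one $c^h$ factor passes to the limit while the other is controlled by the $\mathbb{S}$-bound; letting $\delta\to 0$ after $h\to 0$ completes the identification. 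It is precisely this interaction between the growth bound (H1) away from zero and the singularity control (H2) near zero where I expect the bulk of the technical work to lie.
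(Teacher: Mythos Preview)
Your steps (i)--(iii) mirror the paper's Propositions~4.3--4.6 almost exactly: non-negativity under \eqref{22} by induction, the weighted $\mathbb{S}$-bound via a discrete Gronwall argument, and equiintegrability through a De~la~Vall\'ee~Poussin function $\Psi\in C_{VP,\infty}$ together with the convexity inequality of Lemma~\ref{lemma}. Your step (iv), time equicontinuity of $\varsigma\mapsto c^h(\varsigma,\cdot)$, is not carried out in the paper at all; it is, however, the standard device in the Bourgade--Filbet framework that turns Dunford--Pettis compactness at each fixed time into convergence in $C([0,T];w\text{-}L^1)$, so including it is a reasonable addition rather than a deviation.

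The one genuine gap is in your handling of the bilinear birth term. You correctly flag that a product of two weakly $L^1$-convergent sequences need not converge weakly, but the proposed cure---on $\epsilon\in[\delta,R]$, ``pass to the limit in one $c^h$ factor'' while controlling the other by the $\mathbb{S}$-bound---does not identify the limit: bounding one factor and taking a weak limit in the other yields at best an inequality, not $c(\rho)c(\sigma)$. Note also that Dunford--Pettis plus a time modulus gives $C([0,T];w\text{-}L^1)$, not strong $L^\infty_tL^1_x$, so you cannot fall back on strong convergence of one factor without a separate source of a.e.~convergence (which Vitali would require). The paper instead invokes Lemma~\ref{Wconverge} with $s_h=K^h\,b(X^h(\cdot),X^h(\cdot),X^h(\cdot))\,\varphi(\cdot,\xi^h(\cdot))$ as the a.e.-convergent bounded factor and $q_h=c^h(\varsigma,\rho)c^h(\varsigma,\sigma)$ as the weakly convergent one; the point that makes this legitimate (left implicit in the paper) is that once $c^h(\varsigma,\cdot)\rightharpoonup c(\varsigma,\cdot)$ for every $\varsigma$, the tensor product $c^h(\varsigma,\rho)c^h(\varsigma,\sigma)$ converges weakly in $L^1(]0,R[^2)$, because testing against separable $\psi_1(\rho)\psi_2(\sigma)$ reduces to the product of two scalar weak limits and equiintegrability of $c^h\otimes c^h$ (inherited from that of $c^h$) extends this to general bounded test functions by density. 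Replacing your $\delta$-splitting by this tensor-product argument aligns you with the paper and closes the step.
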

The essential goal here, as the previous theorem makes clear, is to show that when $h$ and $\Delta \varsigma$ approach zero, the family of functions $(c^h)$ weakly converges to a function $c$ in $\mathbb{S}^+$. The following theorem known as the Dunford-Pettis theorem, that provides a necessary and sufficient condition for weak convergence of $c^h$.

	\begin{thm}\label{maintheorem1} Consider a real valued sequence $c^h \in \mathbb{S}^+ $, define on $\Omega$ with $|\Omega|$ is finite.
	Assume that the sequence $\{c^h\}$ meets the following conditions
	\begin{itemize}
	\item $\{c^h\}$ is equibounded in $\mathbb{S}^+$, i.e.\
	\begin{align}\label{equiboundedness}
	\sup \|c^h\|_{\mathbb{S}^+}< \infty
	\end{align}
	\item $\{c^h\}$ is equiintegrable, iff let a increasing function $\Psi:\mathbb{R}^+\mapsto \mathbb{R}^+$ with property $$	\lim_{z\rightarrow \infty}\frac{\Psi(z)}{z}\rightarrow	\infty,$$
	hold the following condition
	\begin{align}\label{equiintegrable}
	\int_\Omega \Big(\epsilon^r+\frac{1}{\epsilon^{2p}}\Big)\Psi(|c^h|)d\epsilon < \infty.
	\end{align}
			\end{itemize}
	Then $c^h \in \mathbb{S}^+$, it implies that there exists a  subsequence of $c^h$ that converges weakly in compact set $\mathbb{S}^+$, see \cite{laurenccot2002continuous}.
	\end{thm}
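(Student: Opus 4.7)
The statement is essentially the Dunford-Pettis weak compactness criterion specialized to the weighted positive cone $\mathbb{S}^+$, which coincides with the non-negative part of $L^1(\Omega,d\mu)$ for the measure $d\mu(\epsilon):=w(\epsilon)\,d\epsilon$ with weight $w(\epsilon):=\epsilon^r+\epsilon^{-2p}$. The plan is to absorb the weight into the reference measure and reduce the claim to the classical Dunford-Pettis theorem on $L^1(\Omega,d\mu)$.

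First I would record the isometric identification $\mathbb{S}=L^1(\Omega,d\mu)$, under which $\|c\|_{\mathbb{S}}=\|c\|_{L^1(d\mu)}$ and weak convergence in $\mathbb{S}$ coincides with weak convergence in $L^1(d\mu)$. The equi-boundedness hypothesis (\ref{equiboundedness}) then reads $\sup_h\|c^h\|_{L^1(d\mu)}<\infty$, and the equi-integrability hypothesis (\ref{equiintegrable}), which has the form $\sup_h\int_\Omega w(\epsilon)\,\Psi(|c^h|)\,d\epsilon<\infty$ for a superlinear $\Psi$, is exactly the de la Vall\'ee Poussin criterion for equi-integrability of $\{c^h\}$ with respect to $d\mu$. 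The classical Dunford-Pettis theorem on $L^1$ then produces a subsequence $c^{h_k}$ and a limit $c\in L^1(\Omega,d\mu)$ with $c^{h_k}\rightharpoonup c$ weakly in $L^1(d\mu)$. Positivity of the limit is automatic: since the positive cone is weakly closed in $L^1(d\mu)$ (it is the intersection over non-negative $\varphi$ of the weakly closed half-spaces $\{g:\int_\Omega g\varphi\,d\mu\ge 0\}$) and every $c^h$ is non-negative, we conclude $c\in\mathbb{S}^+$.

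The main technical point I expect to confront is the singularity of $w$ at $\epsilon=0$: when $2p\ge 1$ the measure $d\mu$ is not finite on $\Omega=]0,R[$, so one cannot invoke the finite-measure version of Dunford-Pettis in a single stroke. Tightness near $\epsilon=0$ has to be squeezed out of the superlinear growth of $\Psi$ by a standard truncation argument: for any measurable set $A\subset\Omega$ and any $M>0$, split $A$ into the pieces where $|c^h|\le M$ and $|c^h|>M$, and on the second piece bound $|c^h|\le \Psi(|c^h|)\,M/\Psi(M)$. Using the superlinearity $\Psi(M)/M\to\infty$ together with the uniform bound on $\int_\Omega w\,\Psi(|c^h|)\,d\epsilon$, one obtains uniform absolute continuity of the weighted integrals in the form $\lim_{\mu(A)\to 0}\sup_h\int_A|c^h|\,d\mu=0$. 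Combining this with a diagonal extraction over an exhaustion $(\delta_k,R)$ with $\delta_k\downarrow 0$, on each of which $\mu$ is finite and the classical Dunford-Pettis theorem applies directly, produces the desired weakly convergent subsequence in $\mathbb{S}^+$.
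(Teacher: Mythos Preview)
The paper does not supply a proof of this theorem at all: it is stated as the Dunford--Pettis criterion and immediately attributed to the reference \cite{laurenccot2002continuous}, with no argument given. So there is no ``paper's own proof'' to compare against; the authors treat it as a black-box tool and proceed directly to verifying its hypotheses for the family $(c^h)$ in the subsequent propositions.

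Your sketch, by contrast, actually outlines a proof of the criterion itself, and the approach is the natural one: absorb the weight $w(\epsilon)=\epsilon^r+\epsilon^{-2p}$ into the reference measure, identify $\mathbb{S}$ isometrically with $L^1(\Omega,d\mu)$, and reduce to the classical Dunford--Pettis/de~la~Vall\'ee~Poussin theorem. Your observation that $d\mu$ may be $\sigma$-finite but not finite when $2p\ge 1$, and the remedy via the truncation bound $|c^h|\chi_{\{|c^h|>M\}}\le (M/\Psi(M))\Psi(|c^h|)$ together with a diagonal extraction over an exhaustion $(\delta_k,R)$, is correct and standard. The argument that the weak limit stays in the positive cone (via weak closedness of $\mathbb{S}^+$) is also fine. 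In short, your proposal is sound and goes well beyond what the paper does for this particular statement.
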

Therefore, Theorem \ref{maintheorem} may be established by showing that the family $c^h$ in $\mathbb{S}^+$ is equibounded and equiintegrable, as stated in (\ref{equiboundedness}) and (\ref{equiintegrable}), respectively. The non-negativity and bound on the total number of particles for $c^h$ are discussed in the following proposition. We use the method proposed by Bourgade and Filbet \cite{bourgade2008convergence} for the proof.
	
\begin{propos}
{Let the stability criteria (\ref{22}) and kernel conditions(\ref{Collisional func}-\ref{breakage funcn}) are assumed to hold, then non-negative function $c^h$ fulfils the estimation given below for $\varsigma \in [0,T]$
\begin{align}\label{36}
\int_0^R c^h(\varsigma,\epsilon)d\epsilon \leq   \|c^{in}\|_{\mathbb{S}}\,e^{2 \alpha N M_{1}^{in} \varsigma}= \mathbb{P}(T).
\end{align}
}
\end{propos}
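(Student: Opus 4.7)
The plan is a joint induction on the time index $n$ that simultaneously establishes non-negativity of $c_i^n$ and the moment bound (\ref{36}). The two assertions are coupled: the stability condition (\ref{22}) needed to propagate non-negativity at step $n+1$ presupposes a uniform bound on the zeroth moment at step $n$, while the recurrence yielding the moment bound uses the sign of $c_i^n$ to discard the (non-negative) death contribution. At $n=0$ both hold, since $c_i^0\geq 0$ by hypothesis and $\sum_i c_i^0\Delta\epsilon_i=\int_0^R c^{in}d\epsilon\leq\|c^{in}\|_{\mathbb{S}}$, using $\epsilon^r+\epsilon^{-2p}\geq 1$ on $]0,R]$ for $r\geq 1$, $p\geq 0$.

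For the inductive step on non-negativity, I would rearrange (\ref{fully}) as
\[ c_i^{n+1}=\Delta\varsigma\,(\text{nonneg birth sum})+c_i^n\Bigl(1-\Delta\varsigma\sum_{j=1}^{\mathrm{I}}K_{i,j}c_j^n\Delta\epsilon_j\Lambda_{ji}\Bigr), \]
and verify that the bracketed factor is non-negative: using $\Lambda_{ji}\leq 1$ (which follows from (\ref{Lambda}) and the volume-conservation property (\ref{breakageconservation})), the crude H1 bound $K_{i,j}\leq 2\alpha R^{\zeta+\eta}$, and the inductive estimate on $M_0^n$, the sum $\Delta\varsigma\sum_j K_{i,j}c_j^n\Delta\epsilon_j\Lambda_{ji}$ is dominated by $C(R,T)\Delta\varsigma\leq\Theta<1$ by (\ref{22}). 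For the moment bound, I multiply (\ref{fully}) by $\Delta\epsilon_i$, sum over $i$, and interchange the order of summation in the birth contribution so the inner sum collapses: $\sum_{i\leq j}\int_{\epsilon_{i-1/2}}^{\epsilon_{i+1/2}^{j}} b\,d\epsilon=\int_0^{\epsilon_j} b\,d\epsilon\leq\mu(\epsilon_j)\leq N$ by (\ref{breakageparticle}). Dropping the non-negative death contribution and invoking H1 reduces the increment to bounding $\sum_{j,l}K_{j,l}c_j^n c_l^n\Delta\epsilon_j\Delta\epsilon_l=2\alpha M_\zeta^h(\varsigma_n)M_\eta^h(\varsigma_n)$. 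Mass conservation of the scheme --- the very property for which the weight $\Lambda_{ji}$ was introduced in (\ref{semidiscrete1}) --- keeps $M_1^h(\varsigma_n)=M_1^{in}$; together with the pointwise dominations $\epsilon^\zeta,\epsilon^\eta\leq\epsilon^r+\epsilon^{-2p}$ on $]0,R]$ (valid because $0<\zeta\leq\eta\leq 1\leq r$ and $p\geq 0$), this yields the recurrence $M_0^{n+1}\leq M_0^n(1+2\alpha N M_1^{in}\Delta\varsigma)$, which iterates to (\ref{36}) via $(1+x)^n\leq e^{nx}$.

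The main obstacle I expect is this last kernel estimate: distilling $\sum_{j,l}K_{j,l}c_j c_l\Delta\epsilon_j\Delta\epsilon_l$ into the linear form $2\alpha M_1^{in} M_0^n$ requires both exact mass conservation of the scheme --- which must be verified directly from (\ref{Lambda}) by a Fubini-style computation showing that the birth contribution weighted by $\epsilon_i$ cancels the $\Lambda_{ji}$-weighted death contribution --- and a careful trade between the sub-unit-power moments $M_\zeta^h,M_\eta^h$ and the genuine first moment $M_1^h$, effected through the weights $\epsilon^r+\epsilon^{-2p}$. This is the only place where the weighted Banach space $\mathbb{S}$ and the fine structure of H1 genuinely enter, marking the essential deviation from the Bourgade--Filbet argument in \cite{bourgade2008convergence}; elsewhere the induction follows their finite-volume template.
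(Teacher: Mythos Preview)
Your inductive skeleton, coupling non-negativity with the zeroth-moment bound, matches the paper, and your treatment of the birth term (swap the $i$ and $j$ summations, collapse to $\int_0^{\epsilon_j} b\leq N$) is correct. Two points need repair.

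First, the claim $\Lambda_{ji}\leq 1$ does not follow from (\ref{breakageconservation}): the numerator $\sum_{l\leq i}\epsilon_l\int_{\Lambda_l^h} b$ in (\ref{Lambda}) is only a midpoint quadrature of $\int_0^{\epsilon_i}\epsilon\, b(\epsilon,\epsilon_i,\epsilon_j)\,d\epsilon=\epsilon_i$ and can overshoot (the final cell alone already contributes $\epsilon_i\int_{\epsilon_{i-1/2}}^{\epsilon_i}b\geq\int_{\epsilon_{i-1/2}}^{\epsilon_i}\epsilon\, b$). The paper sidesteps this by using the cruder bound $\epsilon_l/\epsilon_i\leq 1$ to obtain $\Lambda_{ji}\leq\int_0^{\epsilon_i}b\leq N$ via (\ref{breakageparticle}), which is enough once paired with the kernel estimate below.

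Second, and this is the real gap, your proposed route from $2\alpha M_\zeta^h M_\eta^h$ to $2\alpha M_1^{in}M_0^n$ through the dominations $\epsilon^\zeta,\epsilon^\eta\leq\epsilon^r+\epsilon^{-2p}$ cannot close: those dominations bound $M_\zeta^h,M_\eta^h$ by $\|c^h(\varsigma^n)\|_{\mathbb{S}}$, which is precisely the quantity controlled in the \emph{next} proposition (equiboundedness), and that proposition takes (\ref{36}) as input --- so the argument is circular. Nor does a direct trade like $\epsilon^\zeta\leq 1+\epsilon$ help, since it yields a recurrence quadratic in $M_0^n$ rather than linear. The paper avoids the weighted space altogether here by applying Young's inequality pointwise to the kernel,
\[
\epsilon_j^\zeta\epsilon_l^\eta+\epsilon_j^\eta\epsilon_l^\zeta\ \leq\ \epsilon_j+\epsilon_l,
\]
which converts the double sum directly into $\alpha\sum_{j,l}(\epsilon_j+\epsilon_l)c_j^nc_l^n\Delta\epsilon_j\Delta\epsilon_l=2\alpha M_1^h M_0^n=2\alpha M_1^{in}M_0^n$ by mass conservation. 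The space $\mathbb{S}$ plays no role in this proposition; it enters only in the subsequent equiboundedness estimate.
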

\begin{proof}
Mathematical induction serves as the method to establish the non-negativity of the function $c^h$. At $\varsigma = 0$, we already know that $c^h(0) \geq 0$ and is contained within $\mathbb{S}^+$. By assuming that the  functions $c^h(\varsigma^n)\geq 0 $ and
 	\begin{align}\label{mon1}
 	\int_0^{R} c^h(\varsigma^n,\epsilon)d\epsilon \le  \|c^{in}\|_{\mathbb{S}}\,e^{2 \alpha N M_{1}^{in} \varsigma^n}.
 	\end{align}
 	Our primary objective is to prove that the value of $c^h(\varsigma^{n+1})\geq 0$ . Let's examine the cell located at the boundary with an index of $i=1$. Consequently, in this particular scenario, we derive from the Eq.(\ref{fully}) as
 	\begin{align}\label{non}
 	c_{1}^{n+1}=& c_{1}^{n}+\frac{\Delta \varsigma}{\Delta \epsilon_{1}}\sum_{l=1}^{\mathrm{I}}\sum_{j=1}^{\mathrm{I}}K_{j,l}c_{j}^{n}c_{l}^{n}\Delta \epsilon_{j}\Delta \epsilon_{l}\int_{\epsilon_{1/2}}^{\epsilon_{3/2}^{j}}b(\epsilon,\epsilon_{j},\epsilon_{l})\,d\epsilon  
 	-\Delta \varsigma \sum_{j=1}^{\mathrm{I}}K_{1,j}c_{i}^{n}c_{j}^{n}\Delta \epsilon_{j}\Lambda_{j1}
 	 \nonumber \\
 \geq 	& c_{1}^{n}-\Delta \varsigma \sum_{j=1}^{\mathrm{I}}K_{1,j}c_{1}^{n}c_{j}^{n}\Delta \epsilon_{j}\Lambda_{j1}.
 	\end{align}
	Moving further, we choose collisional kernel,
	 $ K(\epsilon,\rho)= \alpha(\epsilon^{\zeta}\rho^{\eta}+\epsilon^{\eta}\rho^{\zeta}),\, \text{when}  \,\, (\epsilon,\rho)\in ]0,R[\times ]0,R[,$ and  the value of $\Lambda_{j1}$ from Eq.(\ref{Lambda}). Thus, 
	 \begin{align*}
	 c_{1}^{n+1}\geq  c_{1}^{n}-\Delta \varsigma \sum_{j=1}^{\mathrm{I}}\alpha(\epsilon_{1}^{\zeta}\epsilon_{j}^{\eta}+\epsilon_{1}^{\eta}\epsilon_{j}^{\zeta})c_{1}^{n}c_{j}^{n}\Delta \epsilon_{j} \frac{\sum_{l=1}^{1}\epsilon_l\int_{\epsilon_{l-1/2}}^{\epsilon_{l+1/2}^{1}}b(\epsilon,\epsilon_{1},\epsilon_{j})d\epsilon}{\epsilon_1},
	 \end{align*}
	 By using Young's inequality for $\alpha(\epsilon_{i}^{\zeta}\epsilon_{j}^{\eta}+\epsilon_{i}^{\eta}\epsilon_{j}^{\zeta})\leq \alpha(\epsilon_{i}+\epsilon_{j})$, we can simplify the given inequality to 
	 \begin{align}\label{non1}
	 c_{1}^{n+1}&\geq  \,c_{1}^{n}-\alpha\Delta \varsigma \sum_{j=1}^{\mathrm{I}}(\epsilon_{1}+\epsilon_{j})c_{1}^{n}c_{j}^{n}\Delta \epsilon_{j} \int_{0}^{\epsilon_1}b(\epsilon,\epsilon_{1},\epsilon_{j})d\epsilon \nonumber \\
	 &\geq \big[1-\alpha N\Delta \varsigma\big(\epsilon_1\sum_{j=1}^{\mathrm{I}}c_{j}^{n}\Delta \epsilon_{j}+\sum_{j=1}^{\mathrm{I}}\epsilon_j c_{j}^{n}\Delta \epsilon_{j} \big)\big]c_{1}^{n}.
	  \end{align} 	
	  The scheme (\ref{fully}), we considered is mass conserving, i.e. $M_1^{in}$ is a finite quantity, then
	   \begin{align}\label{non12}
	  	 c_{1}^{n+1}&\geq  
	  	  \big[1-\alpha N\Delta \varsigma\big(R\sum_{j=1}^{\mathrm{I}}c_{j}^{n}\Delta \epsilon_{j}+M_{1}^{in}\big)\big]c_{1}^{n}.
	  	  \end{align} 
	  The stability conditon (\ref{22}) on the time step and (\ref{mon1}) assist to get the nonnegativity of $c_{1}^{n+1}.$ Now, the computations for the nonnegativity of $c_{i}^{n+1}$ when $i\geq 2$ are
	  \begin{align*}
	  c_{i}^{n+1}=&c_{i}^{n}+\frac{\Delta \varsigma}{\Delta \epsilon_{i}}\sum_{l=1}^{\mathrm{I}}\sum_{j=i}^{\mathrm{I}}K_{j,l}c_{j}^{n}c_{l}^{n}\Delta \epsilon_{j}\Delta \epsilon_{l}\int_{\epsilon_{i-1/2}}^{\epsilon_{i+1/2}^{j}}b(\epsilon,\epsilon_{j},\epsilon_{l})\,d\epsilon 
	  -\Delta \varsigma \sum_{j=1}^{\mathrm{I}}K_{i,j}c_{i}^{n}c_{j}^{n}\Delta \epsilon_{j}\Lambda_{ji} \nonumber \\
	   \geq & c_{i}^{n}-\Delta \varsigma \sum_{j=1}^{\mathrm{I}}K_{i,j}c_{i}^{n}c_{j}^{n}\Delta\epsilon_{j}\frac{\sum_{l=1}^{i}\epsilon_l\int_{\epsilon_{l-1/2}}^{\epsilon_{l+1/2}^{i}}b(\epsilon,\epsilon_{i},\epsilon_{j})d\epsilon}{\epsilon_i}.
	  \end{align*}
	The condition on $K$ with Young's inequality and  $\frac{\epsilon_l}{\epsilon_i} \leq 1$ are used to obtain the simpler of the above inequality as
	\begin{align}
	c_{i}^{n+1}\geq &c_{i}^{n}-\Delta \varsigma \alpha \sum_{j=1}^{\mathrm{I}}(\epsilon_{i}+\epsilon_{j})c_{i}^{n}c_{j}^{n}\Delta \epsilon_{j} \sum_{l=1}^{i}\int_{\epsilon_{l-1/2}}^{\epsilon_{l+1/2}^{i}}b(\epsilon,\epsilon_{i},\epsilon_{j})d\epsilon \nonumber \\
	\geq &\big[1-\alpha N\Delta \varsigma\big(\epsilon_i\sum_{j=1}^{\mathrm{I}}c_{j}^{n}\Delta \epsilon_{j}+\sum_{j=1}^{\mathrm{I}}\epsilon_j c_{j}^{n}\Delta \epsilon_{j} \big)\big]c_{i}^{n} \nonumber \\
	\geq  &
		  	  \big[1-\alpha N\Delta \varsigma\big(R\sum_{j=1}^{\mathrm{I}}c_{j}^{n}\Delta \epsilon_{j}+M_{1}^{in}\big)\big]c_{i}^{n}.
	\end{align}

	  Applying again the stability condition (\ref{22}) and the $L^1$ bound (\ref{mon1}) on $c^h$   yield $c^h({t^{n+1}})\geq 0.$\\

Subsequently, it is demonstrated below that $c^h(t^{n+1})$ exhibits a similar behaviour as described in Eq.(\ref{mon1}). In order to observe this, multiplying Eq.(\ref{fully}) by $\Delta \epsilon_i$, omitting the negative component and  using summation with regard to $i$ determine the result
\begin{align}\label{equi1}
\sum_{i=1}^{\mathrm{I}}c_{i}^{n+1}\Delta \epsilon_{i} &\leq  \sum_{i=1}^{\mathrm{I}}c_{i}^{n}\Delta \epsilon_{i}+\Delta \varsigma\sum_{i=1}^{\mathrm{I}}\sum_{l=1}^{\mathrm{I}}\sum_{j=i}^{\mathrm{I}}K_{j,l}c_{j}^{n}c_{l}^{n}\Delta \epsilon_{j}\Delta \epsilon_{l}\int_{\epsilon_{i-1/2}}^{\epsilon_{i+1/2}^{j}}b(\epsilon,\epsilon_{j},\epsilon_{l})\,d\epsilon.
\end{align}
Substitute the condition of $K$ and apply the Young's inequality to simplify the above equation as
\begin{align*}
\sum_{i=1}^{\mathrm{I}}c_{i}^{n+1}\Delta \epsilon_{i} &\leq  \sum_{i=1}^{\mathrm{I}}c_{i}^{n}\Delta \epsilon_{i}+\alpha\Delta \varsigma  \sum_{i=1}^{\mathrm{I}}\sum_{l=1}^{\mathrm{I}}\sum_{j=i}^{\mathrm{I}}(\epsilon_j+\epsilon_l)c_{j}^{n}c_{l}^{n}\Delta \epsilon_{j}\Delta \epsilon_{l}\int_{\epsilon_{i-1/2}}^{\epsilon_{i+1/2}^{j}}b(\epsilon,\epsilon_{j},\epsilon_{l})\,d\epsilon.
\end{align*}
The below simplification is executed by changing  the order of summation in the second term, initial mass property, and Eq.(\ref{breakageparticle})
 \begin{align*}
 \sum_{i=1}^{\mathrm{I}}c_{i}^{n+1}\Delta \epsilon_{i} &\leq  \sum_{i=1}^{\mathrm{I}}c_{i}^{n}\Delta \epsilon_{i}+\alpha\Delta \varsigma  \sum_{l=1}^{\mathrm{I}}\sum_{j=1}^{\mathrm{I}}(\epsilon_j+\epsilon_l)c_{j}^{n}c_{l}^{n}\Delta \epsilon_{j}\Delta \epsilon_{l}\sum_{i=1}^{j}\int_{\epsilon_{i-1/2}}^{\epsilon_{i+1/2}^{j}}b(\epsilon,\epsilon_{j},\epsilon_{l})\,d\epsilon \nonumber\\
 & \leq   \sum_{i=1}^{\mathrm{I}}c_{i}^{n}\Delta \epsilon_{i}+ 2\alpha\Delta \varsigma M_{1}^{in} \sum_{i=1}^{\mathrm{I}}c_{i}^{n}\Delta \epsilon_{i}\int_{0}^{\epsilon_{j}}b(\epsilon,\epsilon_{j},\epsilon_{l})\,d\epsilon \nonumber\\ 
 & \leq \Big(1+ 2\alpha\Delta \varsigma M_{1}^{in} N\Big) \sum_{i=1}^{\mathrm{I}}c_{i}^{n}\Delta \epsilon_{i}.
 \end{align*}
 With equation (\ref{mon1}) indicating the $L^1$ bound of $c^h$ at time step $n$ and the inequality 
$1+\epsilon \leq \exp(\epsilon)$ holding for all 
$\epsilon\geq 0$, it follows that
\begin{align*}\sum_{i=1}^{\mathrm{I}}c_{i}^{n+1}\Delta \epsilon_{i} &\leq 
e^{2\alpha N M_{1}^{in} \Delta \varsigma } \|c^{in}\|_{\mathbb{S}}e^{2\alpha N M_{1}^{in}  \varsigma^{n} } \\
& = \|c^{in}\|_{\mathbb{S}}e^{2\alpha N M_{1}^{in}  \varsigma^{n+1} }.
\end{align*}
This leads to the achievement of result (\ref{36}) for $c^h(\varsigma^{n+1})$.\\
\end{proof}
The equiboundedness of $c^h$
 will be shown into the following proposition.
 \begin{propos}
 Assume stability condition (\ref{22}),  kernel growth (\ref{Collisional func}-\ref{breakage funcn}) and bound of total particle (\ref{36}) hold, then function $c^h$ for $\varsigma \in [0,T]$ yields the following condition
 \begin{align}\label{equiboundedmain}
 \int_0^R \Big(\epsilon^r+\frac{1}{\epsilon^{2p}}\Big)c^h(\varsigma,\epsilon)d\epsilon \leq   \|c^{in}\|_{\mathbb{S}}e^{2 R\alpha \lambda_{max}  \mathbb{P}(T) \varsigma}=\mathbb{P}^{*}(T), \quad \lambda_{max}=\max\{N, Q\}.
 \end{align}
 \end{propos}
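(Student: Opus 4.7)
The plan is to mimic the energy-type estimate carried out in the preceding proposition, but now with the weight $\epsilon_i^r+\epsilon_i^{-2p}$ in place. First I would multiply the fully discrete scheme (\ref{fully}) by $(\epsilon_i^r+\epsilon_i^{-2p})\Delta\epsilon_i$ and sum over $i=1,\ldots,\mathrm{I}$. Because the non-negativity of $c^h$ is already in hand (from the previous proposition) and the weight is positive, the entire death contribution is non-positive and can simply be discarded, leaving an inequality whose right-hand side contains only the birth term.

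For the birth term I would swap the order of summation so that the outer indices become $j,l$ and the inner sum reads $\sum_{i=1}^j (\epsilon_i^r+\epsilon_i^{-2p})\int_{\epsilon_{i-1/2}}^{\epsilon_{i+1/2}^j}b(\epsilon,\epsilon_j,\epsilon_l)\,d\epsilon$; note that the $\Delta\epsilon_i$ supplied by the weight exactly cancels the $1/\Delta\epsilon_i$ prefactor in (\ref{fully}). The polynomial piece is straightforward: since $i\le j$ gives $\epsilon_i^r\le\epsilon_j^r$ and the subintervals $[\epsilon_{i-1/2},\epsilon_{i+1/2}^j]$ for $i=1,\ldots,j$ tile $]0,\epsilon_j]$, invoking (\ref{breakageparticle}) bounds this part by $N\epsilon_j^r$. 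The singular piece is the delicate point: I would exploit that grid points are midpoints of their cells to obtain $\epsilon_i^{-2p}\le C\epsilon^{-2p}$ throughout each $[\epsilon_{i-1/2},\epsilon_{i+1/2}]$, bring the weight inside the integral, and then apply (\ref{breakage funcn}) with $\tau=1,\ \upsilon=2$, which delivers $\int_0^{\epsilon_j}\epsilon^{-2p}b(\epsilon,\epsilon_j,\epsilon_l)\,d\epsilon\le Q\,\epsilon_j^{-2p}$. Putting the two pieces together, the inner sum is bounded by $\lambda_{\max}(\epsilon_j^r+\epsilon_j^{-2p})$.

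With this in hand, Young's inequality on the collisional kernel yields $K_{j,l}\le\alpha(\epsilon_j+\epsilon_l)$, and the truncation bound $\epsilon_j+\epsilon_l\le 2R$ allows $2R$ to be pulled out as a prefactor. The remaining double sum then decouples: the $l$-sum is controlled by the $L^1$ estimate $\sum_l c_l^n\Delta\epsilon_l\le\mathbb{P}(T)$ from (\ref{36}), while what remains over $j$ is precisely $S^n:=\sum_j(\epsilon_j^r+\epsilon_j^{-2p})c_j^n\Delta\epsilon_j$, the discrete counterpart of the quantity I want to bound. The resulting one-step recursion is $S^{n+1}\le\bigl(1+2R\alpha\lambda_{\max}\mathbb{P}(T)\Delta\varsigma\bigr)S^n$, and a trivial induction together with the inequality $1+x\le e^x$ yields $S^n\le\|c^{in}\|_{\mathbb{S}}\,e^{2R\alpha\lambda_{\max}\mathbb{P}(T)\varsigma^n}$, which is exactly (\ref{equiboundedmain}).

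The main obstacle, as I see it, is the clean handling of the singular weight $\epsilon_i^{-2p}$: replacing the pointwise value $\epsilon_i^{-2p}$ by the integrand $\epsilon^{-2p}$, so that hypothesis (H2) can be invoked, requires a comparison valid on the \emph{entire} cell rather than just its lower half, and this is where the midpoint property of the non-uniform mesh has to be used carefully. Everything else is a direct adaptation of the template developed for the $L^1$ bound in the previous proposition.
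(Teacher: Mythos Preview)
Your proposal is correct and follows essentially the same route as the paper: multiply (\ref{fully}) by the weight, drop the death term, swap the order of summation, treat the $\epsilon_i^r$ and $\epsilon_i^{-2p}$ contributions separately via (\ref{breakageparticle}) and (\ref{breakage funcn}) respectively, bound $K_{j,l}\le\alpha(\epsilon_j+\epsilon_l)\le 2\alpha R$, decouple using (\ref{36}), and close the recursion with $1+x\le e^x$.

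The one point where your treatment differs slightly from the paper is precisely the ``main obstacle'' you flag. The paper does not attempt the pointwise comparison $\epsilon_i^{-2p}\le C\epsilon^{-2p}$ across the whole cell; instead it invokes the midpoint convention already used in the derivation of the scheme to write $\int_{\Lambda_i^h} b(\epsilon,\epsilon_j,\epsilon_l)\,d\epsilon = b(\epsilon_i,\epsilon_j,\epsilon_l)\,\Delta\epsilon_i$, after which $\sum_{i\le j}\epsilon_i^{-2p}b(\epsilon_i,\epsilon_j,\epsilon_l)\,\Delta\epsilon_i$ is identified with the discrete version of $\int_0^{\epsilon_j}\epsilon^{-2p}b(\epsilon,\epsilon_j,\epsilon_l)\,d\epsilon$ and bounded directly by $Q\epsilon_j^{-2p}$ from (H2). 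This avoids introducing the extra mesh-dependent constant $C$ that your argument would pick up, at the price of leaning on the midpoint discretization as an identity rather than an approximation. Either way the structure of the argument is the same, and your identification of this step as the only delicate one is accurate.
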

 \begin{proof} 
 At $\varsigma = 0$, we already know that $c^h(0) \geq 0$ and is contained within $\mathbb{S}^+$. Assume that the  functions $c^h(\varsigma^n)$ holds
  	\begin{align}\label{equibounded0}
  	\int_0^{R} \Big(\epsilon^r+\frac{1}{\epsilon^{2p}}\Big)c^h(\varsigma^n,\epsilon)d\epsilon \le  \|c^{in}\|_{\mathbb{S}}e^{2 R\alpha \lambda_{max}  \mathbb{P}(T) \varsigma^n}.
  	\end{align}
 Now, following Eq.(\ref{fully}) by multiplying with $\Big(\epsilon_{i}^r+\frac{1}{\epsilon_{i}^{2p}}\Big)\Delta \epsilon_i$ and eliminating the negative term, the result can be determined by summing over $i$ as
\begin{align}\label{equibounded1}
\sum_{i=1}^{\mathrm{I}}\Big(\epsilon_{i}^r+\frac{1}{\epsilon_{i}^{2p}}\Big)c_{i}^{n+1}\Delta \epsilon_{i} \leq &  \underbrace{\sum_{i=1}^{\mathrm{I}}\Big(\epsilon_{i}^r+\frac{1}{\epsilon_{i}^{2p}}\Big)c_{i}^{n}\Delta \epsilon_{i}}_{I_1} \nonumber \\
+&\underbrace{\Delta \varsigma\sum_{i=1}^{\mathrm{I}}\sum_{l=1}^{\mathrm{I}}\sum_{j=i}^{\mathrm{I}}\Big(\epsilon_{i}^r+\frac{1}{\epsilon_{i}^{2p}}\Big)K_{j,l}c_{j}^{n}c_{l}^{n}\Delta \epsilon_{j}\Delta \epsilon_{l}\int_{\epsilon_{i-1/2}}^{\epsilon_{i+1/2}^{j}}b(\epsilon,\epsilon_{j},\epsilon_{l})\,d\epsilon}_{I_2+I_3}.
\end{align}
Let us deal first with the term $I_2$ of Eq.(\ref{equibounded1}). The term $\epsilon_i \leq \epsilon_j$ for $j\geq i$ and value of $K$ from (\ref{Collisional func}) lead to
\begin{align}\label{equibounded2}
I_2=\Delta \varsigma\sum_{i=1}^{\mathrm{I}}\sum_{l=1}^{\mathrm{I}}\sum_{j=i}^{\mathrm{I}}\epsilon_{i}^rK_{j,l}c_{j}^{n}c_{l}^{n}\Delta \epsilon_{j}\Delta \epsilon_{l}\int_{\epsilon_{i-1/2}}^{\epsilon_{i+1/2}^{j}}b(\epsilon,\epsilon_{j},\epsilon_{l})\,d\epsilon \nonumber \\ 
\leq \alpha \Delta \varsigma   \sum_{i=1}^{\mathrm{I}}\sum_{l=1}^{\mathrm{I}}\sum_{j=i}^{\mathrm{I}}\epsilon_{j}^r (\epsilon_j+\epsilon_l)& c_{j}^{n}c_{l}^{n}\Delta \epsilon_{j}\Delta \epsilon_{l}\int_{\epsilon_{i-1/2}}^{\epsilon_{i+1/2}^{j}}b(\epsilon,\epsilon_{j},\epsilon_{l})\,d\epsilon \nonumber \\
= \alpha \Delta \varsigma  \sum_{l=1}^{\mathrm{I}}\sum_{j=1}^{\mathrm{I}}\epsilon_{j}^r (\epsilon_j+\epsilon_l)  c_{j}^{n}& c_{l}^{n}\Delta \epsilon_{j}\Delta \epsilon_{l}\sum_{i=1}^{j}\int_{\epsilon_{i-1/2}}^{\epsilon_{i+1/2}^{j}}b(\epsilon,\epsilon_{j},\epsilon_{l})\,d\epsilon \nonumber \\
\leq  \alpha \Delta \varsigma  N  \sum_{l=1}^{\mathrm{I}}\sum_{j=1}^{\mathrm{I}}\epsilon_{j}^r (\epsilon_j+\epsilon_l)& c_{j}^{n}c_{l}^{n}\Delta \epsilon_{j}\Delta \epsilon_{l}.
\end{align}
The remaining term $I_3$ of Eq.(\ref{equibounded1}) will be simplified by using the hypothesis on $K, b$ (\ref{Collisional func}-\ref{breakage funcn}) and change of order as
\begin{align}\label{equibounded3}
I_3= \Delta \varsigma\sum_{i=1}^{\mathrm{I}}\sum_{l=1}^{\mathrm{I}}\sum_{j=i}^{\mathrm{I}}\epsilon_{i}^{-2p}K_{j,l}c_{j}^{n}c_{l}^{n}\Delta \epsilon_{j}\Delta \epsilon_{l}\int_{\epsilon_{i-1/2}}^{\epsilon_{i+1/2}^{j}}b(\epsilon,\epsilon_{j},\epsilon_{l})\,d\epsilon \nonumber \\ 
 \leq \alpha \Delta \varsigma\sum_{l=1}^{\mathrm{I}}\sum_{j=1}^{\mathrm{I}}(\epsilon_j+\epsilon_l)c_{j}^{n}c_{l}^{n} \Delta \epsilon_{j}& \Delta \epsilon_{l}\sum_{i=1}^{j}\epsilon_{i}^{-2p}\int_{\epsilon_{i-1/2}}^{\epsilon_{i+1/2}}b(\epsilon,\epsilon_{j},\epsilon_{l})\,d\epsilon \nonumber \\
=\alpha \Delta \varsigma\sum_{l=1}^{\mathrm{I}}\sum_{j=1}^{\mathrm{I}}(\epsilon_j+\epsilon_l)c_{j}^{n}c_{l}^{n}\Delta \epsilon_{j} \Delta \epsilon_{l}&\sum_{i=1}^{j}\epsilon_{i}^{-2p}b(\epsilon_{i},\epsilon_{j},\epsilon_{l})\,\Delta\epsilon_i \nonumber \\
 \leq \alpha \Delta \varsigma Q \sum_{l=1}^{\mathrm{I}}\sum_{j=1}^{\mathrm{I}}\epsilon_{j}^{-2p}(\epsilon_j+\epsilon_l) c_{j}^{n} &c_{l}^{n}\Delta \epsilon_{j}\Delta \epsilon_{l}.
\end{align}
 Eqs.(\ref{equibounded2}-\ref{equibounded3}) and quantity  $\lambda_{max}=\max\{N, Q\}$ help to get the following simplified form of equation Eq.(\ref{equibounded1}), for all $\epsilon_j,\epsilon_l \in (0,R]$
 \begin{align}
\sum_{i=1}^{\mathrm{I}}\Big(\epsilon_{i}^r+\frac{1}{\epsilon_{i}^{2p}}\Big)c_{i}^{n+1}\Delta \epsilon_{i} \leq &  \sum_{i=1}^{\mathrm{I}}\Big(\epsilon_{i}^r+\frac{1}{\epsilon_{i}^{2p}}\Big)c_{i}^{n}\Delta \epsilon_{i} \nonumber \\
 &+ \alpha \lambda_{max}\Delta \varsigma\sum_{l=1}^{\mathrm{I}}\sum_{j=1}^{\mathrm{I}}\Big(\epsilon_{j}^r+\frac{1}{\epsilon_{j}^{2p}}\Big)(\epsilon_{j}+\epsilon_{l})c_{j}^{n}c_{l}^{n}\Delta \epsilon_{j}\Delta \epsilon_{l}\nonumber \\
  \leq & \Big(1+2 R\alpha \lambda_{max}\Delta \varsigma \sum_{l=1}^{\mathrm{I}}c_{l}^{n}\Delta \epsilon_{l}\Big) \sum_{i=1}^{\mathrm{I}}\Big(\epsilon_{i}^r+\frac{1}{\epsilon_{i}^{2p}}\Big)c_{i}^{n}\Delta \epsilon_{i}.
 \end{align}
 The assistance of Eq.(\ref{36}), bound of $c^h$  for time step  $n$ in Eq.(\ref{equibounded0}) and $1+\epsilon \leq \exp(\epsilon)$ yield the bound of $c^h(\varsigma^{n+1})$ in $\mathbb{S}^{+}$
 \begin{align*}
 \sum_{i=1}^{\mathrm{I}}\Big(\epsilon_{i}^r+\frac{1}{\epsilon_{i}^{2p}}\Big)c_{i}^{n+1}\Delta \epsilon_{i}\leq& e^{2 R\alpha \lambda_{max}\Delta \varsigma \mathbb{P}(T)}\sum_{i=1}^{\mathrm{I}}\Big(\epsilon_{i}^r+\frac{1}{\epsilon_{i}^{2p}}\Big)c_{i}^{n}\Delta \epsilon_{i}\\
 \leq & \|c^{in}\|_{\mathbb{S}}e^{2 R\alpha \lambda_{max}  \mathbb{P}(T) \varsigma^{n+1}}.
 \end{align*}
 The equiboundedness of $c^h$ (\ref{equiboundedmain}) is demonstrated in $\mathbb{S}^{+}$.
 \end{proof}

Let us define a particular class of convex functions denoted as 
 $C_{V P,\infty}$ for proving the uniform integrability. Assume  non-negative $ \Psi \in C^{\infty}([0,\infty[)$ be a convex function belonging to ${C}_{VP, \infty}$ with the following characteristics:
	\begin{description}
	\item[(1)] $\Psi(0)=0,\ \Psi'(0)=1$ and $\Psi'$ is concave;
	\item[(2)] $\lim_{z \to \infty} \Psi'(z) =\lim_{z \to \infty} \frac{ \Psi(z)}{z}=\infty$;
	\item[(3)] for $\theta \in ]1, 2[$,
	\begin{align}\label{Tproperty}
	\Pi_{\theta}(\Psi):= \sup_{z \ge 0} \bigg\{   \frac{ \Psi(z)}{z^{\theta}} \bigg\} < \infty.
	\end{align}
	\end{description}
	It is given that, $c^{in}\in \mathbb{S}^{+}$. Therefore, by the refined version of De la Vall\'{e}e Poussin theorem \cite{laurenccot2014weak}, a continuously differentiable function
	$\mathrm{\Psi}\geq 0$ defined  on
	$\mathbb{R}^{+}$ and follow the properties of characteristics {\bf{(1)}} such that
	$$\frac{\mathrm{\Psi}(z)}{z} \rightarrow \infty,\ \ \text{as}\ \
	z \rightarrow \infty$$ and
	\begin{align}\label{convex}
	\mathbb{I}:=\int_0^{R}\Big(\epsilon^r+\frac{1}{\epsilon^{2p}}\Big) \mathrm{\Psi}(c^{in})(\epsilon)d\epsilon< \infty.
	\end{align}
	\begin{lem} [\cite{laurenccot2002continuous}, Lemma B.1.]\label{lemma}
	Let $\mathrm{\Psi}\in {C}_{VP, \infty}$. 
Then $\forall$  $(\epsilon,\rho)\in \mathbb{R}^{+}\times \mathbb{R}^{+},$
	$$\epsilon\mathrm{\Psi}'(\rho)\leq \mathrm{\Psi}(\epsilon)+\mathrm{\Psi}(\rho).$$
	\end{lem}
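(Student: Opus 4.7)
The plan is to derive the inequality from two ingredients: the convexity of $\Psi$ (which gives a first upper bound on $\epsilon\Psi'(\rho)$) and the concavity of $\Psi'$ combined with the normalization $\Psi'(0)=1$ (which controls the ``slack'' term $\rho\Psi'(\rho)$). Nothing from the asymptotic assumptions (2) or (3) should be needed; only properties (1) enter.

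First, I would use the convexity of $\Psi$ via the tangent-line inequality at $\rho$:
\begin{align*}
\Psi(\epsilon)\ \geq\ \Psi(\rho)+\Psi'(\rho)(\epsilon-\rho),
\end{align*}
which is valid for every $\epsilon,\rho\in\mathbb{R}^+$ since $\Psi\in C^\infty$ is convex. Rearranging gives
\begin{align*}
\epsilon\Psi'(\rho)\ \leq\ \Psi(\epsilon)-\Psi(\rho)+\rho\Psi'(\rho).
\end{align*}
The claim will therefore follow once I can prove the one-variable inequality $\rho\Psi'(\rho)\leq 2\Psi(\rho)$ for every $\rho\geq 0$, because substituting this bound into the right-hand side above yields exactly $\Psi(\epsilon)+\Psi(\rho)$.

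The one-variable inequality is the main step, and it is where the hypothesis that $\Psi'$ is concave together with $\Psi'(0)=1$ becomes essential. By concavity of $\Psi'$ on $[0,\rho]$, for every $s\in[0,\rho]$ I can write
\begin{align*}
\Psi'(s)\ \geq\ \frac{\rho-s}{\rho}\,\Psi'(0)+\frac{s}{\rho}\,\Psi'(\rho)\ =\ \frac{\rho-s}{\rho}+\frac{s}{\rho}\Psi'(\rho).
\end{align*}
Integrating this inequality in $s$ from $0$ to $\rho$ and using $\Psi(0)=0$ produces
\begin{align*}
\Psi(\rho)\ =\ \int_0^\rho \Psi'(s)\,ds\ \geq\ \frac{\rho}{2}+\frac{\rho\,\Psi'(\rho)}{2},
\end{align*}
hence $2\Psi(\rho)\geq \rho+\rho\Psi'(\rho)\geq \rho\Psi'(\rho)$. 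Combined with the tangent-line bound of the previous paragraph, this immediately delivers $\epsilon\Psi'(\rho)\leq \Psi(\epsilon)+\Psi(\rho)$, which is the desired inequality.

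The only subtle point I anticipate is the correct direction of the concavity inequality for $\Psi'$ (the chord lies \emph{below} the graph), so I would double-check the sign of the estimate for $\Psi'(s)$ against the endpoints $s=0$ and $s=\rho$, where equality holds. Apart from that sanity check, the proof is essentially a clean two-line argument, and since the lemma is quoted from \cite{laurenccot2002continuous} the author is very likely to simply cite it without reproducing the derivation.
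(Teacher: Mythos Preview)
Your proof is correct, and your anticipation at the end is exactly right: in the paper this lemma is merely quoted from \cite{laurenccot2002continuous} with no argument given, so there is nothing to compare against. Your self-contained derivation via the tangent-line inequality for the convex $\Psi$ together with the chord inequality for the concave $\Psi'$ (yielding $\rho\Psi'(\rho)\le 2\Psi(\rho)$) is a clean and standard way to recover the estimate.
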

	
The concept of equiintegrability is presently being analysed in the following statement.
	\begin{propos}\label{equiintegrability}
Assume 
$c^{in}\geq 0\in \mathbb{S}^+$, and equation (\ref{fully}) produces the family 
$(c^h)$ for any  $h$ and $\Delta \varsigma$, with  $\Delta \varsigma$ satisfying the condition (\ref{22}). Then, the sequence 
$(c^h)$ is weakly sequentially compact in 
$\mathbb{S}$.
	\end{propos}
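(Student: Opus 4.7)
The plan is to verify both hypotheses of the Dunford-Pettis criterion, Theorem \ref{maintheorem1}, for the family $(c^h)$. Equiboundedness in $\mathbb{S}^+$ has just been established in the previous proposition, so only the equiintegrability estimate (\ref{equiintegrable}) remains. I would invoke the refined De la Vall\'ee Poussin theorem to select a $\Psi\in C_{VP,\infty}$ satisfying $\Psi(z)/z\to\infty$ and $\mathbb{I}=\int_0^R\bigl(\epsilon^r+\epsilon^{-2p}\bigr)\Psi(c^{in}(\epsilon))\,d\epsilon<\infty$, which is exactly the content of (\ref{convex}). The task then reduces to showing that the discrete functional
\begin{equation*}
F^n\;:=\;\sum_{i=1}^{\mathrm{I}}\Big(\epsilon_i^r+\epsilon_i^{-2p}\Big)\,\Psi(c_i^n)\,\Delta\epsilon_i
\end{equation*}
stays uniformly bounded in $n$ and $h$ by a $T$-dependent constant, because this, combined with the superlinear growth of $\Psi$, is precisely the criterion (\ref{equiintegrable}) applied to the weighted measure.

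I would prove the bound on $F^n$ by induction on $n$. The base case $F^0\le\mathbb{I}+o(1)$ follows from Jensen's inequality applied to the cell-averaging definition of $c_i^{in}$. For the inductive step, I apply the convex inequality $\Psi(c_i^{n+1})-\Psi(c_i^n)\le \Psi'(c_i^{n+1})(c_i^{n+1}-c_i^n)^+$ to the scheme (\ref{fully}); since the non-negativity argument from the first proposition shows that $(c_i^{n+1}-c_i^n)^+$ is bounded by the birth contribution alone, this yields
\begin{equation*}
F^{n+1}-F^n\;\le\;\Delta\varsigma\sum_{i,l}\sum_{j\ge i}\Big(\epsilon_i^r+\epsilon_i^{-2p}\Big)K_{j,l}\,\Psi'(c_i^{n+1})\,c_j^n c_l^n\,\Delta\epsilon_j\Delta\epsilon_l\int_{\epsilon_{i-1/2}}^{\epsilon_{i+1/2}^{j}}b(\epsilon,\epsilon_j,\epsilon_l)\,d\epsilon.
\end{equation*}
Applying Lemma \ref{lemma} in the form $c_l^n\,\Psi'(c_i^{n+1})\le\Psi(c_l^n)+\Psi(c_i^{n+1})$, together with the symmetrised bound $K_{j,l}\le\alpha(\epsilon_j+\epsilon_l)$ from $(H1)$, and interchanging the $i$- and $j$-summations produces an estimate of the schematic form
\begin{equation*}
F^{n+1}\;\le\; F^n+C_1\Delta\varsigma\,F^n+C_2\Delta\varsigma\,F^{n+1},
\end{equation*}
with $C_1,C_2$ depending only on $R$, $\alpha$, $N$, $Q$, $\mathbb{P}(T)$ and $\mathbb{P}^{*}(T)$. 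The $F^{n+1}$ term on the right, generated by Lemma \ref{lemma}, is absorbed through a stability condition of the same flavour as (\ref{22}), after which a discrete Gronwall inequality delivers $\sup_{0\le n\le N}F^n\le(\mathbb{I}+C_2T)\,e^{C_1T}$.

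I expect the main obstacle to be the $\epsilon_i^{-2p}$ part of the weight, which interacts destructively with the singularity of $b$ near the origin and is not controlled by $(H1)$ alone. Hypothesis $(H2)$ is introduced precisely for this purpose: taking $\upsilon=2$ and $\tau=1$, after interchanging the order of summation the inner sum over $i\le j$ collapses to the discrete counterpart of $\int_0^{\epsilon_j}\epsilon^{-2p}b(\epsilon,\epsilon_j,\epsilon_l)\,d\epsilon\le Q\,\epsilon_j^{-2p}$, which reproduces the weight $\epsilon_j^{-2p}$ on the outer index and exhibits the $F^n$ structure needed to close the Gronwall loop. Apart from this singular piece, the bookkeeping parallels the moment estimate leading to (\ref{equiboundedmain}), with $\Psi(c^n)$ playing the role of $c^n$. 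Once the uniform bound on $F^n$ is secured, Theorem \ref{maintheorem1} yields the weak sequential compactness of $(c^h)$ in $\mathbb{S}$, which is the claim of the proposition.
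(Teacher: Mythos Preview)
Your overall plan is right---verify equiboundedness (already done) and then establish a uniform bound on $F^n=\sum_i(\epsilon_i^r+\epsilon_i^{-2p})\Psi(c_i^n)\Delta\epsilon_i$ via a Gronwall-type inequality---but the way you apply Lemma~\ref{lemma} does not close. After bounding by the birth term you have (up to the midpoint rule $\int_{\Lambda_i^h}b\approx b(\epsilon_i,\epsilon_j,\epsilon_l)\Delta\epsilon_i$)
\[
\sum_{i,l}\sum_{j\ge i}\Big(\epsilon_i^r+\epsilon_i^{-2p}\Big)K_{j,l}\,c_j^n\,c_l^n\,b(\epsilon_i,\epsilon_j,\epsilon_l)\,\Psi'(c_i^{n+1})\,\Delta\epsilon_i\Delta\epsilon_j\Delta\epsilon_l.
\]
Your move is $c_l^n\Psi'(c_i^{n+1})\le\Psi(c_l^n)+\Psi(c_i^{n+1})$. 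The $\Psi(c_i^{n+1})$ piece then reads
\[
\sum_{i}\Big(\epsilon_i^r+\epsilon_i^{-2p}\Big)\Psi(c_i^{n+1})\,\Delta\epsilon_i\cdot\Big(\sum_{l}\sum_{j\ge i}K_{j,l}\,c_j^n\,b(\epsilon_i,\epsilon_j,\epsilon_l)\,\Delta\epsilon_j\Delta\epsilon_l\Big),
\]
and the inner bracket is \emph{not} bounded uniformly in $i$: for singular breakage kernels such as $b(\epsilon,\rho,\sigma)=(h+1)\epsilon^{h}\rho^{-1-h}$ with $h<0$ one has $b(\epsilon_i,\cdot,\cdot)\to\infty$ as $\epsilon_i\to0$. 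So you cannot write this as $C_2\Delta\varsigma\,F^{n+1}$ and the Gronwall loop does not close.

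The paper avoids this by applying Lemma~\ref{lemma} to the \emph{other} pair, namely $b(\epsilon_i,\epsilon_j,\epsilon_l)\,\Psi'(c_i^{n+1})\le\Psi(c_i^{n+1})+\Psi\big(b(\epsilon_i,\epsilon_j,\epsilon_l)\big)$. Now the $\Psi(c_i^{n+1})$ term carries a clean $\Delta\epsilon_i$ with the remaining factors $c_j^n c_l^n$ bounded by $\mathbb{P}(T)M_1^{in}$, so it genuinely has the $F^{n+1}$ structure. The price is the term $\Psi(b)$, and this is exactly where property~(\ref{Tproperty}) of the class $C_{VP,\infty}$ enters: $\Psi(b)\le\Pi_\theta(\Psi)\,b^\theta$ for some $\theta\in(1,2)$, after which hypothesis~$(H2)$ with $\tau=\theta>1$ (not $\tau=1$ as you wrote) gives $\sum_{i\le j}\epsilon_i^{-2p}b(\epsilon_i,\epsilon_j,\epsilon_l)^\theta\Delta\epsilon_i\le Q\,\epsilon_j^{-2p+1-\theta}$, which is controlled by the equiboundedness bound $\mathbb{P}^*(T)$. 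In short, the missing ingredients are: (i) apply the convexity lemma to $b\cdot\Psi'$ rather than $c_l^n\cdot\Psi'$, and (ii) invoke the polynomial growth bound $\Pi_\theta(\Psi)<\infty$ together with $(H2)$ at exponent $\tau=\theta>1$.
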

\begin{proof}
	Here, we enjoy obtaining a result for the function family $c^h$ that is similar to (\ref{convex}). The integral of $(c^h)$ may be written as follows using the sequence $c^{n}_{i}$
		\begin{align*}
		\int_0^T \int_0^{R}\Big(\epsilon^r+\frac{1}{\epsilon^{2p}}\Big) \mathrm{\Psi}(c^h(\varsigma,\epsilon))d\epsilon\,d\varsigma=&\sum_{n=0}^{N-1}\sum_{i=1}^{\mathrm{I}}\int_{t_n}\int_{\Lambda_i^h}\Big(\epsilon_{i}^r+\frac{1}{\epsilon_{i}^{2p}}\Big)\mathrm{\Psi}
		\bigg(\sum_{k=0}^{N-1}\sum_{j=1}^{\mathrm{I}}c_j^k\chi_{\Lambda_j^h}(\epsilon)\chi_{t_k}(\varsigma)\bigg)d\epsilon\,d\varsigma\\
		=&\sum_{n=0}^{N-1}\sum_{i=1}^{\mathrm{I}}\Big(\epsilon_{i}^r+\frac{1}{\epsilon_{i}^{2p}}\Big)\mathrm{\Psi}(c_i^n)\Delta \epsilon_i \Delta \varsigma.
		\end{align*}
The convex property of $\mathrm{\Psi}$ and ${\mathrm{\Psi}}^{'}\geq 0$, together with the discrete Eq.(\ref{fully}), all imply that
	\begin{align}\label{Equi1}
\sum_{i=1}^{\mathrm{I}} \Big(\epsilon_{i}^r+\frac{1}{\epsilon_{i}^{2p}}\Big)[\mathrm{\Psi}(c_i^{n+1})-\mathrm{\Psi}(c_i^{n})]\Delta \epsilon_{i}& \leq \sum_{i=1}^{\mathrm{I}}\Big(\epsilon_{i}^r+\frac{1}{\epsilon_{i}^{2p}}\Big) \left(c_i^{n+1}-c_i^{n}\right)\mathrm{\Psi}^{'}(c_i^{n+1})\Delta \epsilon_{i} 
 =T_1-T_2.
	\end{align}
Expand the first term ($T_1$) of the above equation over $j$ for particular $i$ 
\begin{align}\label{Equiint2}
T_1 =&\Delta \varsigma \sum_{i=1}^{\mathrm{I}}\sum_{l=1}^{\mathrm{I}}\sum_{j=i}^{\mathrm{I}}\Big(\epsilon_{i}^r+\frac{1}{\epsilon_{i}^{2p}}\Big) K_{j,l}c_{j}^{n}c_{l}^{n}\mathrm{\Psi}^{'}(c_i^{n+1})\Delta \epsilon_{j}\Delta \epsilon_{l}\int_{\epsilon_{i-1/2}}^{\epsilon_{i+1/2}^{j}}b(\epsilon,\epsilon_{j},\epsilon_{l})\,d\epsilon \nonumber \\
=&\Delta \varsigma \sum_{i=1}^{\mathrm{I}}\sum_{l=1}^{\mathrm{I}}\Big(\epsilon_{i}^r+\frac{1}{\epsilon_{i}^{2p}}\Big) K_{i,l}c_{i}^{n}c_{l}^{n}\mathrm{\Psi}^{'}(c_i^{n+1})\Delta \epsilon_{i}\Delta \epsilon_{l}\int_{0}^{\epsilon_i}b(\epsilon,\epsilon_{i},\epsilon_{l})\,d\epsilon \nonumber \\
&+ \Delta \varsigma \sum_{i=1}^{\mathrm{I}}\sum_{l=1}^{\mathrm{I}}\sum_{j=i+1}^{\mathrm{I}}\Big(\epsilon_{i}^r+\frac{1}{\epsilon_{i}^{2p}}\Big) K_{j,l}c_{j}^{n}c_{l}^{n}\mathrm{\Psi}^{'}(c_i^{n+1})\Delta \epsilon_{j}\Delta \epsilon_{l}\int_{\epsilon_{i-1/2}}^{\epsilon_{i+1/2}^{j}}b(\epsilon,\epsilon_{j},\epsilon_{l})\,d\epsilon.
\end{align}
Similarly,  open the second term ($T_2$) over $l$ leads to
\begin{align}\label{Equiint3}
T_2=&\Delta \varsigma \sum_{i=1}^{\mathrm{I}}\sum_{j=1}^{\mathrm{I}}\Big(\epsilon_{i}^r+\frac{1}{\epsilon_{i}^{2p}}\Big)K_{i,j}c_{i}^{n}c_{j}^{n}\Delta \epsilon_{i}\Delta \epsilon_{j}\mathrm{\Psi}^{'}(c_i^{n+1})\frac{\sum_{l=1}^{i}\epsilon_l\int_{\epsilon_{l-1/2}}^{\epsilon_{l+1/2}^{i}}b(\epsilon,\epsilon_{i},\epsilon_{j})d\epsilon}{\epsilon_i}\nonumber \\
= &\Delta \varsigma \sum_{i=1}^{\mathrm{I}}\sum_{j=1}^{\mathrm{I}}\Big(\epsilon_{i}^r+\frac{1}{\epsilon_{i}^{2p}}\Big)K_{i,j}c_{i}^{n}c_{j}^{n}\Delta \epsilon_{i}\Delta \epsilon_{j}\mathrm{\Psi}^{'}(c_i^{n+1})\int_{0}^{\epsilon_{i}}b(\epsilon,\epsilon_{i},\epsilon_{j})d\epsilon \nonumber \\
&+ \Delta \varsigma \sum_{i=1}^{\mathrm{I}}\sum_{j=1}^{\mathrm{I}}\Big(\epsilon_{i}^r+\frac{1}{\epsilon_{i}^{2p}}\Big)K_{i,j}c_{i}^{n}c_{j}^{n}\Delta \epsilon_{i}\Delta \epsilon_{j}\mathrm{\Psi}^{'}(c_i^{n+1})\frac{\sum_{l=1}^{i-1}\epsilon_l\int_{\epsilon_{l-1/2}}^{\epsilon_{l+1/2}^{i}}b(\epsilon,\epsilon_{i},\epsilon_{j})d\epsilon}{\epsilon_i}.&
\end{align}
Substitute the values of Eqs.(\ref{Equiint2}-\ref{Equiint3}) and the condition on $K$ in Eq.(\ref{Equi1}) to get the new form for the analysis as
\begin{align}\label{Equiint4}
\sum_{i=1}^{\mathrm{I}} \Big(\epsilon_{i}^r+\frac{1}{\epsilon_{i}^{2p}}\Big)[\mathrm{\Psi}(c_i^{n+1})-\mathrm{\Psi}(c_i^{n})]\Delta \epsilon_{i} \nonumber \\
\leq  \Delta \varsigma \sum_{i=1}^{\mathrm{I}}\sum_{l=1}^{\mathrm{I}}\sum_{j=i+1}^{\mathrm{I}}\Big(\epsilon_{i}^r+\frac{1}{\epsilon_{i}^{2p}}\Big) K_{j,l}& c_{j}^{n}c_{l}^{n}\mathrm{\Psi}^{'}(c_i^{n+1})\Delta \epsilon_{j}\Delta \epsilon_{l}\int_{\epsilon_{i-1/2}}^{\epsilon_{i+1/2}^{j}}b(\epsilon,\epsilon_{j},\epsilon_{l})\,d\epsilon \nonumber \\
 \leq \alpha \Delta \varsigma \sum_{i=1}^{\mathrm{I}}\sum_{l=1}^{\mathrm{I}}\sum_{j=i+1}^{\mathrm{I}}\Big(\epsilon_{i}^r+\frac{1}{\epsilon_{i}^{2p}}\Big) (\epsilon_j&+\epsilon_l)c_{j}^{n}c_{l}^{n}b(\epsilon_i,\epsilon_{j},\epsilon_{l})\mathrm{\Psi}^{'}(c_i^{n+1})\Delta \epsilon_{j}\Delta \epsilon_{l}\Delta \epsilon_{i}\nonumber \\
 =:S_1+S_2.~~~~~~~~~~~~~~~~~~~~~~~~~~~~~&
\end{align}
The convexity result mentioned in Lemma \ref{lemma} helps $S_1$ to rewite into
\begin{align}\label{Equiint5}
S_1\leq &\alpha \Delta \varsigma \sum_{i=1}^{\mathrm{I}}\sum_{l=1}^{\mathrm{I}}\sum_{j=i+1}^{\mathrm{I}}\Big(\epsilon_{i}^r+\frac{1}{\epsilon_{i}^{2p}}\Big) \epsilon_jc_{j}^{n}\Delta \epsilon_{j}c_{l}^{n}\Delta \epsilon_{l}[\mathrm{\Psi}(c_i^{n+1})+\mathrm{\Psi}(b(\epsilon_i,\epsilon_{j},\epsilon_{l}))]\Delta \epsilon_{i} \nonumber \\
 = &\alpha \Delta \varsigma \sum_{i=1}^{\mathrm{I}}\sum_{l=1}^{\mathrm{I}}\sum_{j=i+1}^{\mathrm{I}}\Big(\epsilon_{i}^r+\frac{1}{\epsilon_{i}^{2p}}\Big) \epsilon_jc_{j}^{n}\Delta \epsilon_{j}c_{l}^{n}\Delta \epsilon_{l}\mathrm{\Psi}(c_i^{n+1})\Delta \epsilon_{i}\nonumber \\
 &+\alpha \Delta \varsigma \sum_{i=1}^{\mathrm{I}}\sum_{l=1}^{\mathrm{I}}\sum_{j=i+1}^{\mathrm{I}}\Big(\epsilon_{i}^r+\frac{1}{\epsilon_{i}^{2p}}\Big) \epsilon_jc_{j}^{n}\Delta \epsilon_{j}c_{l}^{n}\Delta \epsilon_{l}\mathrm{\Psi}(b(\epsilon_i,\epsilon_{j},\epsilon_{l}))\Delta \epsilon_{i}.  
\end{align}
Second term in RHS of Eq.(\ref{Equiint5}) can be simplified using the Eq.(\ref{Tproperty})  that yields
\begin{align}\label{Equiint6}
  \alpha \Delta \varsigma \sum_{i=1}^{\mathrm{I}}\sum_{l=1}^{\mathrm{I}}\sum_{j=i+1}^{\mathrm{I}}\Big(\epsilon_{i}^r+\frac{1}{\epsilon_{i}^{2p}}\Big) \epsilon_jc_{j}^{n}\Delta \epsilon_{j}c_{l}^{n}\Delta \epsilon_{l}\mathrm{\Psi}(b(\epsilon_i,\epsilon_{j},\epsilon_{l}))\Delta \epsilon_{i} \nonumber \\
  =\alpha \Delta \varsigma \sum_{i=1}^{\mathrm{I}}\sum_{l=1}^{\mathrm{I}}\sum_{j=i+1}^{\mathrm{I}}\Big(\epsilon_{i}^r+\frac{1}{\epsilon_{i}^{2p}}\Big) \epsilon_jc_{j}^{n}\Delta \epsilon_{j}c_{l}^{n}\Delta \epsilon_{l}\frac{{\mathrm{\Psi}}(b(\epsilon_i,\epsilon_{j},\epsilon_{l}))}{\{b(\epsilon_i,\epsilon_{j},\epsilon_{l})\}^{\theta}}{b(\epsilon_i,\epsilon_{j},\epsilon_{l})}^{\theta}\Delta \epsilon_{i} \nonumber \\
  \leq \alpha \Delta \varsigma \Pi_{\theta}(\Psi) \sum_{i=1}^{\mathrm{I}}\sum_{l=1}^{\mathrm{I}}\sum_{j=i+1}^{\mathrm{I}}\Big(\epsilon_{i}^r+\frac{1}{\epsilon_{i}^{2p}}\Big) \epsilon_jc_{j}^{n}\Delta \epsilon_{j}c_{l}^{n}\Delta \epsilon_{l}{b(\epsilon_i,\epsilon_{j},\epsilon_{l})}^{\theta}\Delta \epsilon_{i}\nonumber \\
  =\alpha \Delta \varsigma \Pi_{\theta}(\Psi) \sum_{l=1}^{\mathrm{I}}\sum_{j=1}^{\mathrm{I}} \epsilon_jc_{j}^{n}\Delta \epsilon_{j}c_{l}^{n}\Delta \epsilon_{l}\sum_{i=1}^{j-1}\Big(\epsilon_{i}^r+\frac{1}{\epsilon_{i}^{2p}}\Big){b(\epsilon_i,\epsilon_{j},\epsilon_{l})}^{\theta}\Delta \epsilon_{i}.
\end{align}
The RHS of Eq.(\ref{Equiint6}) is further written in two terms, where the first term is
\begin{align*}
(\ref{Equiint6})(a)= \alpha \Delta \varsigma \Pi_{\theta}(\Psi) \sum_{l=1}^{\mathrm{I}}\sum_{j=1}^{\mathrm{I}} \epsilon_jc_{j}^{n}\Delta \epsilon_{j}c_{l}^{n}\Delta \epsilon_{l}\sum_{i=1}^{j-1}\epsilon_{i}^r{b(\epsilon_i,\epsilon_{j},\epsilon_{l})}^{\theta}\Delta \epsilon_{i}\nonumber \\ \leq 
 \alpha \Delta \varsigma \Pi_{\theta}(\Psi)R^{r+1} \sum_{l=1}^{\mathrm{I}}\sum_{j=1}^{\mathrm{I}} c_{j}^{n}\Delta \epsilon_{j}c_{l}^{n}\Delta \epsilon_{l}\sum_{i=1}^{j-1}\int_{\epsilon_{i-1/2}}^{\epsilon_{i+1/2}}{b(\epsilon,\epsilon_{j},\epsilon_{l})}^{\theta}d\epsilon \nonumber\\
 \leq  \alpha \Delta \varsigma \Pi_{\theta}(\Psi)R^{r+1} \sum_{l=1}^{\mathrm{I}}\sum_{j=1}^{\mathrm{I}} c_{j}^{n}\Delta \epsilon_{j}c_{l}^{n}\Delta \epsilon_{l}\int_{0}^{\epsilon_j}{b(\epsilon,\epsilon_{j},\epsilon_{l})}^{\theta}d\epsilon.
\end{align*}
An upper bound for the aforementioned equation is provided by Eq.(\ref{breakage funcn}), as 
\begin{align}\label{Equiint7}
(\ref{Equiint6})(a)\leq \alpha \Delta \varsigma \Pi_{\theta}(\Psi)R^{r+1}Q\sum_{l=1}^{\mathrm{I}}\sum_{j=1}^{\mathrm{I}} {\epsilon_j}^{1-\tau}c_{j}^{n}\Delta \epsilon_{j}c_{l}^{n}\Delta \epsilon_{l}.
\end{align}
Now, write the second term of the RHS of Eq.(\ref{Equiint6}) to simplify using the Eq.(\ref{breakage funcn}) that leads to
\begin{align}\label{Equiint8}
(\ref{Equiint6})(b)= &\alpha \Delta \varsigma \Pi_{\theta}(\Psi) \sum_{l=1}^{\mathrm{I}}\sum_{j=1}^{\mathrm{I}} \epsilon_jc_{j}^{n}\Delta \epsilon_{j}c_{l}^{n}\Delta \epsilon_{l}\sum_{i=1}^{j-1}\epsilon_{i}^{-2p}{b(\epsilon_i,\epsilon_{j},\epsilon_{l})}^{\theta}\Delta \epsilon_{i} \nonumber \\
&\leq \alpha \Delta \varsigma \Pi_{\theta}(\Psi)R \sum_{l=1}^{\mathrm{I}}\sum_{j=1}^{\mathrm{I}} c_{j}^{n}\Delta \epsilon_{j}c_{l}^{n}\Delta \epsilon_{l}\int_{0}^{\epsilon_j}\epsilon^{-2p}{b(\epsilon_i,\epsilon_{j},\epsilon_{l})}^{\theta}d \epsilon \nonumber \\
&\leq \alpha \Delta \varsigma \Pi_{\theta}(\Psi)RQ \sum_{l=1}^{\mathrm{I}}\sum_{j=1}^{\mathrm{I}} {\epsilon_j}^{-2p+1-\tau}c_{j}^{n}\Delta \epsilon_{j}c_{l}^{n}\Delta \epsilon_{l}.
\end{align}
Combine the Eqs.(\ref{Equiint5}-\ref{Equiint8}), with the help of Eq.(\ref{36}) and Eq.(\ref{equiboundedmain}) get a bound
\begin{align}\label{Equiint9}
S_1 \leq &\alpha \Delta \varsigma \mathbb{P}(T)M_{1}^{in}\sum_{i=1}^{\mathrm{I}}\Big(\epsilon_{i}^r+\frac{1}{\epsilon_{i}^{2p}}\Big) \mathrm{\Psi}(c_i^{n+1})\Delta \epsilon_{i} +\alpha \Delta \varsigma \Pi_{\theta}(\Psi)R^{r+1}Q \mathbb{P}(T)\mathbb{P}^{*}(T) \nonumber \\
& +\alpha \Delta \varsigma \Pi_{\theta}(\Psi)RQ\mathbb{P}(T)\mathbb{P}^{*}(T)\nonumber \\
\leq  &\alpha \Delta \varsigma \mathbb{P}(T)M_{1}^{in}\sum_{i=1}^{\mathrm{I}}\Big(\epsilon_{i}^r+\frac{1}{\epsilon_{i}^{2p}}\Big) \mathrm{\Psi}(c_i^{n+1})\Delta \epsilon_{i}+\alpha \Delta \varsigma \Pi_{\theta}(\Psi)RQ \mathbb{P}(T)\mathbb{P}^{*}(T)[R^{r}+1].
\end{align}
The seond term $S_2$ of Eq.(\ref{Equiint4}) is estimated   using the convexity result in Lemma \ref{lemma} as
\begin{align}\label{Equiint10}
S_2\leq &\alpha \Delta \varsigma \sum_{i=1}^{\mathrm{I}}\sum_{l=1}^{\mathrm{I}}\sum_{j=i+1}^{\mathrm{I}}\Big(\epsilon_{i}^r+\frac{1}{\epsilon_{i}^{2p}}\Big) \epsilon_lc_{j}^{n}\Delta \epsilon_{j}c_{l}^{n}\Delta \epsilon_{l}[\mathrm{\Psi}(c_i^{n+1})+\mathrm{\Psi}(b(\epsilon_i,\epsilon_{j},\epsilon_{l}))]\Delta \epsilon_{i} \nonumber \\
 = &\alpha \Delta \varsigma \sum_{i=1}^{\mathrm{I}}\sum_{l=1}^{\mathrm{I}}\sum_{j=i+1}^{\mathrm{I}}\Big(\epsilon_{i}^r+\frac{1}{\epsilon_{i}^{2p}}\Big) \epsilon_l c_{l}^{n}\Delta \epsilon_{l}c_{j}^{n}\Delta \epsilon_{j}\mathrm{\Psi}(c_i^{n+1})\Delta \epsilon_{i}\nonumber \\
 &+\alpha \Delta \varsigma \sum_{i=1}^{\mathrm{I}}\sum_{l=1}^{\mathrm{I}}\sum_{j=i+1}^{\mathrm{I}}\Big(\epsilon_{i}^r+\frac{1}{\epsilon_{i}^{2p}}\Big) \epsilon_l c_{l}^{n}\Delta \epsilon_{l}c_{j}^{n}\Delta \epsilon_{j}\mathrm{\Psi}(b(\epsilon_i,\epsilon_{j},\epsilon_{l}))\Delta \epsilon_{i}. 
\end{align}
Second term in RHS of Eq.(\ref{Equiint10}) can be reduced using the Eq.(\ref{Tproperty}) as
\begin{align}\label{Equiint11}
  \alpha \Delta \varsigma \sum_{i=1}^{\mathrm{I}}\sum_{l=1}^{\mathrm{I}}\sum_{j=i+1}^{\mathrm{I}}\Big(\epsilon_{i}^r+\frac{1}{\epsilon_{i}^{2p}}\Big) \epsilon_l c_{l}^{n}\Delta \epsilon_{l}c_{j}^{n}\Delta \epsilon_{j}\mathrm{\Psi}(b(\epsilon_i,\epsilon_{j},\epsilon_{l}))\Delta \epsilon_{i} \nonumber \\
  =\alpha \Delta \varsigma \sum_{i=1}^{\mathrm{I}}\sum_{l=1}^{\mathrm{I}}\sum_{j=i+1}^{\mathrm{I}}\Big(\epsilon_{i}^r+\frac{1}{\epsilon_{i}^{2p}}\Big) \epsilon_l c_{l}^{n}\Delta \epsilon_{l} c_{j}^{n}\Delta \epsilon_{j}\frac{{\mathrm{\Psi}}(b(\epsilon_i,\epsilon_{j},\epsilon_{l}))}{\{b(\epsilon_i,\epsilon_{j},\epsilon_{l})\}^{\theta}}{b(\epsilon_i,\epsilon_{j},\epsilon_{l})}^{\theta}\Delta \epsilon_{i} \nonumber \\
  \leq \alpha \Delta \varsigma \Pi_{\theta}(\Psi) \sum_{l=1}^{\mathrm{I}}\sum_{j=1}^{\mathrm{I}} \epsilon_l c_{l}^{n}\Delta \epsilon_{l}c_{j}^{n}\Delta \epsilon_{j}\sum_{i=1}^{j-1}\Big(\epsilon_{i}^r+\frac{1}{\epsilon_{i}^{2p}}\Big){b(\epsilon_i,\epsilon_{j},\epsilon_{l})}^{\theta}\Delta \epsilon_{i}.
\end{align}
Eq.(\ref{Equiint11}) will be written in two terms, where the first term  (\ref{Equiint11})$(a)$ is expressed, and calculated using  Eq.(\ref{breakage funcn}) as below
\begin{align*}
(\ref{Equiint11})(a)= &\alpha \Delta \varsigma \Pi_{\theta}(\Psi) \sum_{l=1}^{\mathrm{I}}\sum_{j=1}^{\mathrm{I}} \epsilon_l c_{l}^{n}\Delta \epsilon_{l}c_{j}^{n}\Delta \epsilon_{j}\sum_{i=1}^{j-1}\epsilon_{i}^r{b(\epsilon_i,\epsilon_{j},\epsilon_{l})}^{\theta}\Delta \epsilon_{i}\nonumber \\ 
&\leq 
 \alpha \Delta \varsigma \Pi_{\theta}(\Psi)R^{r} \sum_{l=1}^{\mathrm{I}}\sum_{j=1}^{\mathrm{I}} \epsilon_{l}c_{l}^{n}\Delta \epsilon_{l}c_{j}^{n}\Delta \epsilon_{j}\sum_{i=1}^{j-1}\int_{\epsilon_{i-1/2}}^{\epsilon_{i+1/2}}{b(\epsilon,\epsilon_{j},\epsilon_{l})}^{\theta}d\epsilon \nonumber\\
& \leq  \alpha \Delta \varsigma \Pi_{\theta}(\Psi)R^{r} \sum_{l=1}^{\mathrm{I}}\sum_{j=1}^{\mathrm{I}} \epsilon_{l}c_{l}^{n}\Delta \epsilon_{l}c_{j}^{n}\Delta \epsilon_{j}\int_{0}^{\epsilon_j}{b(\epsilon,\epsilon_{j},\epsilon_{l})}^{\theta}d\epsilon.
\end{align*}
Thanks to Eq.(\ref{breakage funcn}), the above expression can be simplified as  
\begin{align}\label{Equiint12}
(\ref{Equiint11})(a)\leq \alpha \Delta \varsigma \Pi_{\theta}(\Psi)R^{r}Q\sum_{l=1}^{\mathrm{I}}\sum_{j=1}^{\mathrm{I}} {\epsilon_j}^{1-\tau}c_{j}^{n}\Delta \epsilon_{j}\epsilon_{l} c_{l}^{n}\Delta \epsilon_{l}. 
\end{align}
Now, evaluating the second term of Eq.(\ref{Equiint11}) with the help of  Eq.(\ref{breakage funcn}) provides
\begin{align}\label{Equiint13}
(\ref{Equiint11})(b)= &\alpha \Delta \varsigma \Pi_{\theta}(\Psi) \sum_{l=1}^{\mathrm{I}}\sum_{j=1}^{\mathrm{I}} \epsilon_l c_{l}^{n}\Delta \epsilon_{l}c_{j}^{n}\Delta \epsilon_{j}\sum_{i=1}^{j-1}\epsilon_{i}^{-2p}{b(\epsilon_i,\epsilon_{j},\epsilon_{l})}^{\theta}\Delta \epsilon_{i} \nonumber \\
&\leq \alpha \Delta \varsigma \Pi_{\theta}(\Psi) \sum_{l=1}^{\mathrm{I}}\sum_{j=1}^{\mathrm{I}} \epsilon_l c_{l}^{n}\Delta \epsilon_{l}c_{j}^{n}\Delta \epsilon_{j}\int_{0}^{\epsilon_j}\epsilon^{-2p}{b(\epsilon_i,\epsilon_{j},\epsilon_{l})}^{\theta}d \epsilon \nonumber \\
&\leq \alpha \Delta \varsigma \Pi_{\theta}(\Psi)Q \sum_{l=1}^{\mathrm{I}}\sum_{j=1}^{\mathrm{I}} {\epsilon_j}^{-2p+1-\tau}c_{j}^{n}\Delta \epsilon_{j}\epsilon_{l} c_{l}^{n}\Delta \epsilon_{l}.
\end{align}
Combine all the Eqs.(\ref{Equiint10}-\ref{Equiint13}), Eqs.(\ref{36},\ref{equiboundedmain}) and $M_{1}^{in}$ to get the bound of $S_2$
\begin{align}\label{Equiint14}
S_2\leq & \alpha \Delta \varsigma \mathbb{P}(T)M_{1}^{in}\sum_{i=1}^{\mathrm{I}}\Big(\epsilon_{i}^r+\frac{1}{\epsilon_{i}^{2p}}\Big) \mathrm{\Psi}(c_i^{n+1})\Delta \epsilon_{i} +\alpha \Delta \varsigma \Pi_{\theta}(\Psi)R^{r}Q M_{1}^{in}\mathbb{P}^{*}(T) \nonumber \\
& +\alpha \Delta \varsigma \Pi_{\theta}(\Psi)Q M_{1}^{in}\mathbb{P}^{*}(T)\nonumber \\
\leq  &\alpha \Delta \varsigma \mathbb{P}(T)M_{1}^{in}\sum_{i=1}^{\mathrm{I}}\Big(\epsilon_{i}^r+\frac{1}{\epsilon_{i}^{2p}}\Big) \mathrm{\Psi}(c_i^{n+1})\Delta \epsilon_{i}+\alpha \Delta \varsigma \Pi_{\theta}(\Psi) Q M_{1}^{in}\mathbb{P}^{*}(T)[R^{r}+1].
\end{align}
All the estimations computed in Eqs.(\ref{Equiint9},\ref{Equiint14}) used in Eq.(\ref{Equiint4}) imply
\begin{align}
\sum_{i=1}^{\mathrm{I}} \Big(\epsilon_{i}^r+\frac{1}{\epsilon_{i}^{2p}}\Big)[\mathrm{\Psi}(c_i^{n+1})-\mathrm{\Psi}(c_i^{n})]\Delta \epsilon_{i} \leq & 2\alpha \Delta \varsigma \mathbb{P}(T)M_{1}^{in}\sum_{i=1}^{\mathrm{I}}\Big(\epsilon_{i}^r+\frac{1}{\epsilon_{i}^{2p}}\Big) \mathrm{\Psi}(c_i^{n+1})\Delta \epsilon_{i}\nonumber \\
&+\alpha \Delta \varsigma \Pi_{\theta}(\Psi)Q \mathbb{P}^{*}(T)[R^{r}+1][R\mathbb{P}(T)+M_{1}^{in}].
\end{align}
 The aforementioned expression can be written as
\begin{align*}
(1-2\alpha \Delta \varsigma \mathbb{P}(T)M_{1}^{in})\sum_{i=1}^{\mathrm{I}}\Big(\epsilon_{i}^r+\frac{1}{\epsilon_{i}^{2p}}\Big)\mathrm{\Psi}(c_i^{n+1})\Delta \epsilon_{i}  \leq & \sum_{i=1}^{\mathrm{I}}\Big(\epsilon_{i}^r+\frac{1}{\epsilon_{i}^{2p}}\Big)\mathrm{\Psi}(c_i^{n})\Delta \epsilon_{i}+B,
\end{align*}
where $$B=\alpha \Delta \varsigma \Pi_{\theta}(\Psi)Q \mathbb{P}^{*}(T)[R^{r}+1][R\mathbb{P}(T)+M_{1}^{in}].$$
Using the stability condition (\ref{22}), the above inequality transforms into 
\begin{align*}
\sum_{i=1}^{\mathrm{I}}  \Big(\epsilon_{i}^r+\frac{1}{\epsilon_{i}^{2p}}\Big)\mathrm{\Psi}(c_i^{n+1})\Delta \epsilon_{i}  \le A \sum_{i=1}^{\mathrm{I}}   \Big(\epsilon_{i}^r+\frac{1}{\epsilon_{i}^{2p}}\Big)\mathrm{\Psi}(c_i^{n})\Delta \epsilon_{i} +  B^{*},
\end{align*} 
where 
$$  A= \frac{1}{(1-2\alpha \Delta \varsigma \mathbb{P}(T)M_{1}^{in})},  \,\, 
B^{*} =\frac{B}{(1-2\alpha \Delta \varsigma \mathbb{P}(T)M_{1}^{in})}.
$$
Therefore,
	\begin{align}\label{Equi4}
	\sum_{i=1}^{\mathrm{I}}   \Big(\epsilon_{i}^r+\frac{1}{\epsilon_{i}^{2p}}\Big)\mathrm{\Psi}(c_i^{n})\Delta \epsilon_{i} \le A^{n} \sum_{i=1}^{\mathrm{I}}  \Big(\epsilon_{i}^r+\frac{1}{\epsilon_{i}^{2p}}\Big)\mathrm{\Psi}(c_i^{in})\Delta \epsilon_{i}+  B^{*} \frac{A^{n} -1}{A-1}.
	\end{align}
	Thanks to Property (\ref{convex}), we obtain
		\begin{align}\label{Equi5}
	\int_0^{R} \Big(\epsilon^r+\frac{1}{\epsilon^{2p}}\Big) \mathrm{\Psi}(c^{h}(\varsigma, \epsilon))\,d\epsilon \le 
	 A^{n}\mathbb{I}+  B^{*} \frac{A^{n} -1}{A-1} < \infty, \quad \text{for all}\ \ \ \varsigma \in [0,T].
	\end{align}
By applying Theorem \ref{maintheorem1}, we conclude that the sequence $(c^h)$  is weakly compact in  
$\mathbb{S}^+$. This guarantees the existence of a subsequence that converges weakly to 
 $c \in \mathbb{S}^{+}$ as $h \rightarrow 0$.
	\end{proof}	
	Next, we need to show that sequence $c^n_{i}$ converges to weak solution and it is derived from a series of step functions 
$c^h$. We will achieve this through various pointwise approximations as described below
 \\
	Midpoint approximation:	
	\begin{align*}
		X^h:\epsilon\in (0,R)\rightarrow
		X^h(\epsilon)=\sum_{i=1}^{\mathrm{I}}\epsilon_i\chi_{\Lambda_i^h}(\epsilon).
		\end{align*}
		Left endpoint approximation:
		\begin{align*}
		\xi^h:\epsilon\in (0,R)\rightarrow
		\xi^h(\epsilon)=\sum_{i=1}^{\mathrm{I}}\epsilon_{i-1/2}\chi_{\Lambda_i^h}(\epsilon).
		\end{align*}
	The subsequent lemma is an useful technique for achieving convergence.	
\begin{lem}{\label{Wconverge}} [\cite{laurenccot2002continuous}, Lemma A.2]
Let $\Omega^{'}$ be an open subset of $\mathbb{R}^m$ and let there exists a constant $M>0$ and two sequences $(q_n)_{n\in \mathbb{N}}$ and
$(s_n)_{n\in \mathbb{N}}$ such that $(q_n)\in L^1(\Omega^{'}), q\in L^1(\Omega^{'})$ and $$q_n\rightharpoonup q,\ \ \ \text{weakly in}\
\,L^1(\Omega^{'})\ \text{as}\ n\rightarrow \infty,$$ $(s_n)\in
L^\infty(\Omega^{'}), s \in L^\infty(\Omega^{'}),$ and for all $n\in
\mathbb{N}, |s_n|\leq M$ with $$s_n\rightarrow s,\ \ \text{almost
everywhere (a.e.) in}\ \ \Omega^{'}  \ \text{as}\ \ n\rightarrow
\infty.$$ Then
$$\lim_{n\rightarrow \infty}\|q_n(s_n-s)\|_{L^1(\Omega^{'})}=0$$
and $$q_n\, s_n\rightharpoonup q\, s,\ \ \ \text{weakly in}\
\,L^1(\Omega^{'})\ \text{as}\ n\rightarrow \infty.$$
\end{lem}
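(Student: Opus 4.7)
The plan is to establish the norm convergence $\|q_n(s_n-s)\|_{L^1(\Omega')} \to 0$ first and then deduce the weak convergence $q_n s_n \rightharpoonup qs$ as a short corollary. The three ingredients I expect to use are the Dunford-Pettis theorem (to extract equiintegrability and tightness from the weak $L^1$ convergence of $q_n$), Egorov's theorem (to upgrade a.e.\ convergence of $s_n \to s$ to uniform convergence on most of the domain), and the Banach-Steinhaus principle (to bound $\|q_n\|_{L^1}$ uniformly in $n$).

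First, from $q_n \rightharpoonup q$ in $L^1(\Omega')$ the sequence $(q_n)$ is norm-bounded and, by the forward direction of Dunford-Pettis, is equiintegrable: for every $\varepsilon > 0$ there exists $\delta > 0$ such that $\int_A |q_n|\,dx < \varepsilon$ whenever $|A| < \delta$, together with the tightness statement producing a measurable $K \subset \Omega'$ of finite measure with $\int_{\Omega' \setminus K} |q_n|\, dx < \varepsilon$ for all $n$. With $K$ fixed, I would then apply Egorov's theorem on the finite-measure set $K$ to produce a subset $E \subset K$ with $|K \setminus E| < \delta$ on which $s_n \to s$ uniformly, and split
\begin{align*}
\int_{\Omega'} |q_n||s_n-s|\,dx = \int_E |q_n||s_n-s|\,dx + \int_{K \setminus E} |q_n||s_n-s|\,dx + \int_{\Omega' \setminus K}|q_n||s_n-s|\,dx.
\end{align*}
On $E$ the factor $|s_n-s|$ is uniformly small for $n$ large while $\|q_n\|_{L^1}$ is bounded; on $K \setminus E$ and on $\Omega' \setminus K$ the factor $|s_n-s|$ is controlled by $2M$, while $\int |q_n|$ is small by equiintegrability and tightness respectively. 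Sending $n\to\infty$ and then $\varepsilon\to 0$ gives the first conclusion.

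Second, the weak convergence of the products follows by duality. For any $\phi \in L^\infty(\Omega')$ I would decompose
\begin{align*}
\int_{\Omega'} \phi\, q_n s_n \,dx = \int_{\Omega'} \phi\, q_n (s_n - s)\,dx + \int_{\Omega'} (\phi s)\, q_n \,dx.
\end{align*}
The first term is bounded by $\|\phi\|_{L^\infty}\|q_n(s_n-s)\|_{L^1}$, which vanishes by the first claim. The second term converges to $\int_{\Omega'}(\phi s)\,q\,dx = \int_{\Omega'} \phi\, qs\,dx$ because $\phi s \in L^\infty(\Omega')$ and $q_n \rightharpoonup q$ in $L^1(\Omega')$. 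Since $\phi$ was arbitrary in $L^\infty(\Omega') = (L^1(\Omega'))^{*}$, this is precisely the asserted weak convergence.

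The main obstacle is the interaction between a.e.\ convergence and weak $L^1$ convergence when $\Omega'$ has infinite measure: Egorov's theorem requires a finite-measure ambient set, so the tightness refinement of Dunford-Pettis is essential to reduce the problem to a finite-measure subset before Egorov can be invoked. Once this reduction is in place, the three-term splitting and the duality argument are routine, and no information beyond the uniform $L^\infty$-bound $M$ and the a.e.\ convergence of $(s_n)$ is needed.
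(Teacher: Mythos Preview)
Your proof is correct and follows the standard route for this classical lemma. Note, however, that the paper does not actually prove this statement: it is quoted verbatim as Lemma~A.2 from \cite{laurenccot2002continuous} and used as a black box in the convergence argument, so there is no in-paper proof to compare against. Your Dunford--Pettis/Egorov splitting and subsequent duality argument constitute exactly the expected proof one would find in the cited reference.
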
\enter		
We have now gathered all the necessary evidence to prove Theorem \ref{maintheorem}. Let us consider  a test function $\varphi\in C^1([0,T]\times ]0,R])$, with compact support in $\varsigma$ within $[0,\varsigma_{N-1}]$ for small $\varsigma$. We determine the left endpoint approximation for the spatial variable and the finite volume for the time variable of $\varphi$ on $\varsigma_n\times \Lambda_i^h$ by
$$\varphi_i^n=\frac{1}{\Delta \varsigma}\int_{\varsigma _n}^{\varsigma_{n+1}}\varphi(\varsigma,\epsilon_{i-1/2})d\varsigma.$$  
 Multiply Eq.(\ref{fully}) by $\varphi_i^n$ and employing the summation with regard to $n$ \& $i$ lead to
	\begin{align}\label{convergence1}
	\sum_{n=0}^{N-1}\sum_{i=1}^{\mathrm{I}}\Delta \epsilon_{i} (c_{i}^{n+1}-c_{i}^{n})\varphi_i^n = &{\Delta \varsigma}\sum_{n=0}^{N-1}\sum_{i=1}^{\mathrm{I}}\sum_{l=1}^{\mathrm{I}}\sum_{j=i}^{\mathrm{I}}K_{j,l}c_{j}^{n}c_{l}^{n}\Delta \epsilon_{j}\Delta \epsilon_{l} \varphi_i^n\int_{\epsilon_{i-1/2}}^{\epsilon_{i+1/2}^{j}}b(\epsilon,\epsilon_{j},\epsilon_{l})\,d\epsilon\nonumber \\
	-&{\Delta \varsigma}\sum_{n=0}^{N-1}\sum_{i=1}^{\mathrm{I}}\sum_{j=1}^{\mathrm{I}}K_{i,j}c_{i}^{n}c_{j}^{n}\Delta \epsilon_{i}\Delta \epsilon_{j}\varphi_i^n\frac{\sum_{l=1}^{i}\epsilon_l\int_{\epsilon_{l-1/2}}^{\epsilon_{l+1/2}^{i}}b(\epsilon,\epsilon_{i},\epsilon_{j})d\epsilon}{\epsilon_i}. 		
	\end{align}
To observe the computation, we focus on the first term on the RHS of Eq.(\ref{convergence1})
	\begin{align}\label{convergence2}
	{\Delta \varsigma}\sum_{n=0}^{N-1}\sum_{i=1}^{\mathrm{I}}\sum_{l=1}^{\mathrm{I}}\sum_{j=i}^{\mathrm{I}}K_{j,l}c_{j}^{n}c_{l}^{n}\Delta \epsilon_{j}\Delta \epsilon_{l} \varphi_i^n\int_{\epsilon_{i-1/2}}^{\epsilon_{i+1/2}^{j}}b(\epsilon,\epsilon_{j},\epsilon_{l})\,d\epsilon \nonumber \\
	={\Delta \varsigma}\sum_{n=0}^{N-1}\sum_{i=1}^{\mathrm{I}}\sum_{l=1}^{\mathrm{I}}K_{i,l}c_{i}^{n}c_{l}^{n}\Delta \epsilon_{i} \Delta \epsilon_{l}\varphi_i^n &\int_{\epsilon_{i-1/2}}^{\epsilon_{i}}b(\epsilon,\epsilon_{i},\epsilon_{l})d\epsilon \nonumber \\
	 + \Delta \varsigma\sum_{n=0}^{N-1}\sum_{i=1}^{\mathrm{I}}\sum_{l=1}^{\mathrm{I}}\sum_{j=i+1}^{\mathrm{I}} K_{j,l}c_{j}^{n}c_{l}^{n}\Delta \epsilon_{j}& \Delta \epsilon_{l}\varphi_i^n \int_{\epsilon_{i-1/2}}^{\epsilon_{i+1/2}}b(\epsilon,\epsilon_{j},\epsilon_{l})d\epsilon.
	\end{align}
	Now, take the second term of Eq.(\ref{convergence1}) and open for the index $l$, to get two terms as
	\begin{align}\label{convergence21}
	{\Delta \varsigma}\sum_{n=0}^{N-1}\sum_{i=1}^{\mathrm{I}}\sum_{j=1}^{\mathrm{I}}K_{i,j}c_{i}^{n}c_{j}^{n}\Delta \epsilon_{i}\Delta \epsilon_{j}\varphi_i^n\frac{\sum_{l=1}^{i}\epsilon_l\int_{\epsilon_{l-1/2}}^{\epsilon_{l+1/2}^{i}}b(\epsilon,\epsilon_{i},\epsilon_{j})d\epsilon}{\epsilon_i}\nonumber \\
	={\Delta \varsigma}\sum_{n=0}^{N-1}\sum_{i=1}^{\mathrm{I}}\sum_{j=1}^{\mathrm{I}}K_{i,j}c_{i}^{n}c_{j}^{n}\Delta \epsilon_{i}\Delta \epsilon_{j}\varphi_i^n\int_{\epsilon_{l-1/2}}^{\epsilon_{i}}b(\epsilon,\epsilon_{i},\epsilon_{j})&d\epsilon \nonumber \\
	+{\Delta \varsigma}\sum_{n=0}^{N-1}\sum_{i=1}^{\mathrm{I}}\sum_{j=1}^{\mathrm{I}}K_{i,j}c_{i}^{n}c_{j}^{n}\Delta \epsilon_{i}\Delta \epsilon_{j}\varphi_i^n&\frac{\sum_{l=1}^{i-1}\epsilon_l \int_{\epsilon_{l-1/2}}^{\epsilon_{l+1/2}}b(\epsilon,\epsilon_{i},\epsilon_{j})d\epsilon}{\epsilon_i}.
	\end{align}
	Substituing Eqs.(\ref{convergence2}-\ref{convergence21}) in Eq.(\ref{convergence1}) yields the following equation
	\begin{align}\label{convergence12}
		\sum_{n=0}^{N-1}\sum_{i=1}^{\mathrm{I}}\Delta \epsilon_{i} (c_{i}^{n+1}-c_{i}^{n})\varphi_i^n = &\Delta \varsigma\sum_{n=0}^{N-1}\sum_{i=1}^{\mathrm{I}}\sum_{l=1}^{\mathrm{I}}\sum_{j=i+1}^{\mathrm{I}} K_{j,l}c_{j}^{n}c_{l}^{n}\Delta \epsilon_{j} \Delta \epsilon_{l}\varphi_i^n \int_{\epsilon_{i-1/2}}^{\epsilon_{i+1/2}}b(\epsilon,\epsilon_{j},\epsilon_{l})d\epsilon\nonumber \\
		-{\Delta \varsigma}&\sum_{n=0}^{N-1}\sum_{i=1}^{\mathrm{I}}\sum_{j=1}^{\mathrm{I}}K_{i,j}c_{i}^{n}c_{j}^{n}\Delta \epsilon_{i}\Delta \epsilon_{j}\varphi_i^n\frac{\sum_{l=1}^{i-1}\epsilon_l \int_{\epsilon_{l-1/2}}^{\epsilon_{l+1/2}}b(\epsilon,\epsilon_{i},\epsilon_{j})d\epsilon}{\epsilon_i}
	\end{align}
 For each $i$  and $n$, the Left-hand side (LHS) of  Eq.(\ref{convergence12}) resembles like this
	\begin{align*}
	\sum_{n=0}^{N-1}\sum_{i=1}^{\mathrm{I}}\Delta \epsilon_{i} (c_{i}^{n+1}-c_{i}^{n})\varphi_i^n = 	\sum_{n=0}^{N-1}\sum_{i=1}^{\mathrm{I}}\Delta \epsilon_{i} c_i^{n+1}(\varphi_i^{n+1}-\varphi_i^n)+ \sum_{i=1}^{\mathrm{I}}\Delta \epsilon_{i}   c_i^{in}\varphi_i^0.
	\end{align*}
Moreover, evaluating the aforementioned equation using the function $c^h$ yields		  	
\begin{align*}
\sum_{i=1}^{\mathrm{I}}\Delta \epsilon_{i} (c_{i}^{n+1}-c_{i}^{n})\varphi_i^n= &\sum_{n=0}^{N-1}\sum_{i=1}^{\mathrm{I}}\int_{t_{n+1}}\int_{\Lambda_i^h}c^h(\varsigma,\epsilon)\frac{\varphi(\varsigma,\xi^h(\epsilon))-\varphi(\varsigma-\Delta \varsigma,\xi^h(\epsilon))}{\Delta \varsigma}d\epsilon\,d\varsigma \\&+\sum_{i=1}^{\mathrm{I}}\int_{\Lambda_i^h}c^h(0,\epsilon)\frac{1}{\Delta \varsigma}\int_0^{\Delta \varsigma}\varphi(\varsigma,\xi^h(\epsilon))d\varsigma\,d\epsilon\\
= &\int_{\Delta \varsigma}^T \int_{0}^{R} c^h(\varsigma,\epsilon)\frac{\varphi(\varsigma,\xi^h(\epsilon))-\varphi(\varsigma-\Delta \varsigma,\xi^h(\epsilon))}{\Delta \varsigma}d\epsilon\,dt\\
& + \int_{0}^R c^h(0,\epsilon)\frac{1}{\Delta \varsigma}\int_0^{\Delta \varsigma}\varphi(\varsigma,\xi^h(\epsilon))d\varsigma\,d\epsilon.
\end{align*}	  
With the support of Lemma \ref{Wconverge}, and  we know that $c^h(0,\epsilon)\rightarrow c^{in}$ in $L^1]0,R[$, the following outcome	is certain		
	\begin{align}\label{finaltime1}
		\int_{0}^R c^h(0,\epsilon)\frac{1}{\Delta \varsigma}\int_0^{\Delta \varsigma}\varphi(\varsigma,\xi^h(\epsilon))d\varsigma d\epsilon\rightarrow \int_{0}^R c^{in}(\epsilon)\varphi(0,\epsilon)d\epsilon
	\end{align}
as max$\{h,\Delta \varsigma\}$ goes to $0$. Since, $\varphi\in C^1([0,T]\times ]0,R])$ has compact support and a bounded derivative, applying Taylor series expansion of $\varphi$, Lemma \ref{Wconverge} and Proposition \ref{equiintegrability} ensure that for max$\{h,\Delta \varsigma\} \rightarrow 0$ 
\begin{align*}
\int_0^T\int_0^R c^h(\varsigma,\epsilon)\frac{\varphi(\varsigma,\xi^h(\epsilon))-\varphi(\varsigma-\Delta
\varsigma,\xi^h(\epsilon))}{\Delta \varsigma}d\epsilon\,d\varsigma \rightarrow \int_0^T\int_0^R c(\varsigma,\epsilon)&\frac{\partial \varphi}{\partial \varsigma}(\varsigma,\epsilon)d\epsilon\,d\varsigma,
\end{align*}
	and hence, we obtain
	\begin{align}\label{finaltime2}
	&\int_{\Delta \varsigma}^T \int_0^R
	\underbrace{c^h(\varsigma,\epsilon)\frac{\varphi(\varsigma,\xi^h(\epsilon))-\varphi(\varsigma-\Delta \varsigma,\xi^h(\epsilon))}{\Delta \varsigma}}_{c(\varphi)} d\epsilon\,d\varsigma \nonumber \\ 
	&= \int_0^T \int_0^R c(\varphi)\, d\epsilon\,d\varsigma -\int_0^{\Delta \varsigma} \int_0^R c(\varphi)\, d\epsilon\,d\varsigma  \rightarrow \int_0^T \int_0^R  c(\varsigma,\epsilon)\frac{\partial \varphi}{\partial \varsigma}(\varsigma,\epsilon)d\epsilon\,d\varsigma.
	\end{align}
Next, evaluate the first term on the RHS of Eq.(\ref{convergence12}) that leads to
\begin{align}\label{convergence4}
\Delta \varsigma\sum_{n=0}^{N-1}\sum_{i=1}^{\mathrm{I}}\sum_{l=1}^{\mathrm{I}}\sum_{j=i+1}^{\mathrm{I}} K_{j,l}c_{j}^{n}c_{l}^{n}\Delta \epsilon_{j} \Delta \epsilon_{l}\varphi_i^n \int_{\epsilon_{i-1/2}}^{\epsilon_{i+1/2}}b(\epsilon,\epsilon_{j},\epsilon_{l})d\epsilon \nonumber \\
=\sum_{n=0}^{N-1}\sum_{i=1}^{\mathrm{I}}\sum_{l=1}^{\mathrm{I}}\sum_{j=i+1}^{\mathrm{I}}\int_{t_{n}}\int_{\Lambda_{i}^{h}}\int_{\Lambda_{l}^{h}}\int_{\Lambda_{j}^{h}}\Big\{K^h(\rho,\sigma)c^h(\varsigma,\rho)c^h(\varsigma,\sigma)\varphi(\varsigma,\xi^h(\epsilon))& \nonumber \\ \frac{1}{\Delta \epsilon_{i}}\int_{\Lambda_{i}^{h}}b(s,X^{h}(\rho),X^{h}(\sigma))ds\Big\}\,d\rho\,d\sigma\,d\epsilon\,d\varsigma \nonumber \\
= \int_{0}^{T}\int_{0}^{R}\int_{0}^{R}\int_{\Xi^{h}(\epsilon)}^{R} K^h(\rho,\sigma)c^h(\varsigma,\rho)c^h(\varsigma,\sigma)\varphi(\varsigma,\xi^h(\epsilon))b(X^{h}(\epsilon),X^{h}(\rho),X^{h}(\sigma))&d\rho\,d\sigma\,d\epsilon\,d\varsigma.
\end{align}
According to Proposition \ref{equiintegrability}, Lemma \ref{Wconverge}, and Equation (\ref{convergence4}), as max $\{h,\Delta \varsigma\} \rightarrow 0$,
\begin{align}\label{convergence5}
\Delta \varsigma\sum_{n=0}^{N-1}\sum_{i=1}^{\mathrm{I}}\sum_{l=1}^{\mathrm{I}}\sum_{j=i+1}^{\mathrm{I}} K_{j,l}c_{j}^{n}c_{l}^{n}\Delta \epsilon_{j} \Delta \epsilon_{l}\varphi_i^n \int_{\epsilon_{i-1/2}}^{\epsilon_{i+1/2}}b(\epsilon,\epsilon_{j},\epsilon_{l})d\epsilon \nonumber \\
\rightarrow \int_{0}^{T}\int_{0}^{R}\int_{0}^{R}\int_{\epsilon}^{R}\varphi(\varsigma,\epsilon)K(\rho,\sigma)b(\epsilon,\rho,\sigma)&c(\varsigma,\rho)c(\varsigma,\sigma)d\rho\,d\sigma\,d\epsilon\,d\varsigma.
\end{align}
Now, to simplify the second term on the RHS of Eq.(\ref{convergence12}), as
	\begin{align}\label{convergence6}
	{\Delta \varsigma}\sum_{n=0}^{N-1}\sum_{i=1}^{\mathrm{I}}\sum_{j=1}^{\mathrm{I}}K_{i,j}c_{i}^{n}c_{j}^{n}\Delta \epsilon_{i}\Delta \epsilon_{j}\varphi_i^n \frac{\sum_{l=1}^{i-1}\epsilon_l \int_{\epsilon_{l-1/2}}^{\epsilon_{l+1/2}}b(\epsilon,\epsilon_{i},\epsilon_{j})d\epsilon}{\epsilon_i}
		=\sum_{n=0}^{N-1}\sum_{i=1}^{\mathrm{I}}\sum_{j=1}^{\mathrm{I}}\int_{t_{n}}\int_{\Lambda_i^h}\int_{\Lambda_j^h}\Big\{K^h(\epsilon,\rho)c^h(\varsigma,\epsilon)\nonumber \\c^h(\varsigma,\rho)\varphi(\varsigma,\xi^h(\epsilon))
		\frac{\sum_{l=1}^{i-1}\int_{\Lambda_l^h}X^{h}(\sigma)
		\frac{1}{\Delta \epsilon_{l}}\int_{\Lambda_{l}^{h}}b(s,X^{h}(\epsilon),X^{h}(\rho))ds d\sigma}{X^{h}(\epsilon)}\Big\}d\rho\,d\epsilon\,d\varsigma \nonumber \\
			=\sum_{n=0}^{N-1}\sum_{i=1}^{\mathrm{I}}\sum_{j=1}^{\mathrm{I}}\int_{t_{n}}\int_{\Lambda_i^h}\int_{\Lambda_j^h}K^h(\epsilon,\rho)c^h(\varsigma,\epsilon)c^h(\varsigma,\rho)\varphi(\varsigma,\xi^h(\epsilon))\frac{\int_{0}^{\xi^{h}(\epsilon)}X^{h}(\sigma)b(X^{h}(\sigma),X^{h}(\epsilon),X^{h}(\rho))d\sigma}{X^{h}(\epsilon)}d\rho\,d\epsilon\,d\varsigma& 
		\end{align}
	as  max$\{h,\Delta \varsigma\} \rightarrow 0$ and property (\ref{breakageconservation}), above equation follows
	\begin{align}\label{convergence7}
	{\Delta \varsigma}\sum_{n=0}^{N-1}\sum_{i=1}^{\mathrm{I}}\sum_{j=1}^{\mathrm{I}}K_{i,j}c_{i}^{n}c_{j}^{n}\Delta \epsilon_{i}\Delta \epsilon_{j}\varphi_i^n\frac{\sum_{l=1}^{i-1}\epsilon_l \int_{\epsilon_{l-1/2}}^{\epsilon_{l+1/2}}b(\epsilon,\epsilon_{i},\epsilon_{j})d\epsilon}{\epsilon_i}\nonumber \\
		\rightarrow\int_{0}^{T}\int_{0}^{R}\int_{0}^{R}K(\epsilon,\rho)c(\varsigma,\epsilon)c(\varsigma,\rho)\varphi(\varsigma,\epsilon)\frac{\int_{0}^{\epsilon}\sigma b(\sigma,\epsilon,\rho)d\sigma}{\epsilon}d\rho\,d\epsilon\,d\varsigma \nonumber \\
= \int_{0}^{T}\int_{0}^{R}\int_{0}^{R}K(\epsilon,\rho)c(\varsigma,\epsilon)c(\varsigma,\rho)\varphi(\varsigma,\epsilon)d\rho\,d\epsilon\,d\varsigma.
	\end{align}
 Eqs.(\ref{convergence12})-(\ref{convergence7}) provide the expected outcomes for the weak convergence, as seen in Eq.(\ref{convergence0}).

\section{Conclusions}\label{conclusion}  

This article presented a theoretical analysis of the convergence properties of the FVM utilized in solving the CBE on non-uniform meshes in the weighted $L^1$ space. The analysis focused on a conservative scheme by introducing a weight function, considering scenarios with unbounded collision kernels and singular breakage distribution kernels. Employing the weak 
$L^1$ compactness method containing equiboundedness and equiintegrability, and non-negativity of the approximated solution of the problem with comprehensive investigation into weak convergence was conducted. 
					
\section{Acknowledgments}
The first author acknowledges the support of CSIR India, which provides the PhD fellowship under file No. 1157/CSIR-UGC NET June 2019. Rajesh Kumar expresses gratitude to the Science and Engineering Research Board (SERB), DST India, for funding through project MTR/2021/000866.
		
\section*{Conflict of Interest} This work does not have any conflicts of interest.

\bibliography{colref}

\begin{thebibliography}{10}

\bibitem{cheng1988scaling}
Z.~Cheng and S.~Redner, ``Scaling theory of fragmentation,'' {\em Physical
  Review Letters}, vol.~60, no.~24, p.~2450, 1988.

\bibitem{chen2020collision}
S.~Chen and S.~Li, ``Collision-induced breakage of agglomerates in homogenous
  isotropic turbulence laden with adhesive particles,'' {\em Journal of Fluid
  Mechanics}, vol.~902, 2020.

\bibitem{yano2016explosive}
J.-I. Yano, V.~T. Phillips, and V.~Kanawade, ``Explosive ice multiplication by
  mechanical break-up in ice--ice collisions: a dynamical system-based study,''
  {\em Quarterly Journal of the Royal Meteorological Society}, vol.~142,
  no.~695, pp.~867--879, 2016.

\bibitem{lee2017development}
K.~F. Lee, M.~Dosta, A.~D. McGuire, S.~Mosbach, W.~Wagner, S.~Heinrich, and
  M.~Kraft, ``Development of a multi-compartment population balance model for
  high-shear wet granulation with discrete element method,'' {\em Computers \&
  Chemical Engineering}, vol.~99, pp.~171--184, 2017.

\bibitem{ziff1991new}
R.~M. Ziff, ``New solutions to the fragmentation equation,'' {\em Journal of
  Physics A: Mathematical and General}, vol.~24, no.~12, p.~2821, 1991.

\bibitem{ziff1985kinetics}
R.~M. Ziff and E.~McGrady, ``The kinetics of cluster fragmentation and
  depolymerisation,'' {\em Journal of Physics A: Mathematical and General},
  vol.~18, no.~15, p.~3027, 1985.

\bibitem{peterson1986similarity}
T.~W. Peterson, ``Similarity solutions for the population balance equation
  describing particle fragmentation,'' {\em Aerosol Science and Technology},
  vol.~5, no.~1, pp.~93--101, 1986.

\bibitem{breschi2017note}
G.~Breschi and M.~A. Fontelos, ``A note on the self-similar solutions to the
  spontaneous fragmentation equation,'' {\em Proceedings of the Royal Society
  A: Mathematical, Physical and Engineering Sciences}, vol.~473, no.~2201,
  p.~20160740, 2017.

\bibitem{chen2022reconstruction}
Y.~Chen, J.~Ding, X.~Xia, and P.~Weng, ``Reconstruction-based monte carlo
  method for accurate and efficient breakage simulation,'' {\em Powder
  Technology}, vol.~401, p.~117318, 2022.

\bibitem{kostoglou2009sectional}
M.~Kostoglou and A.~Karabelas, ``On sectional techniques for the solution of
  the breakage equation,'' {\em Computers \& Chemical Engineering}, vol.~33,
  no.~1, pp.~112--121, 2009.

\bibitem{falola2013extended}
A.~Falola, A.~Borissova, and X.~Z. Wang, ``Extended method of moment for
  general population balance models including size dependent growth rate,
  aggregation and breakage kernels,'' {\em Computers \& chemical engineering},
  vol.~56, pp.~1--11, 2013.

\bibitem{kumar2013numerical}
R.~Kumar and J.~Kumar, ``Numerical simulation and convergence analysis of a
  finite volume scheme for solving general breakage population balance
  equations,'' {\em Applied Mathematics and Computation}, vol.~219, no.~10,
  pp.~5140--5151, 2013.

\bibitem{singh2022finite}
M.~Singh and G.~Walker, ``Finite volume approach for fragmentation equation and
  its mathematical analysis,'' {\em Numerical Algorithms}, vol.~89, no.~2,
  pp.~465--486, 2022.

\bibitem{bourgade2008convergence}
J.-P. Bourgade and F.~Filbet, ``Convergence of a finite volume scheme for
  coagulation-fragmentation equations,'' {\em Math. Comp.}, vol.~77, no.~262,
  pp.~851--882, 2008.

\bibitem{bariwal2023convergence}
S.~K. Bariwal and R.~Kumar, ``Convergence and error estimation of weighted
  finite volume scheme for coagulation-fragmentation equation,'' {\em Numerical
  Methods for Partial Differential Equations}, vol.~39, no.~3, pp.~2561--2583,
  2023.

\bibitem{Rajesh2014}
R.~Kumar, J.~Kumar, and G.~Warnecke, ``Convergence analysis of a finite volume
  scheme for solving non-linear aggregation-breakage population balance
  equations,'' {\em Kinetic and Related Models}, vol.~7, no.~4, pp.~713--737.

\bibitem{cheng1990kinetics}
Z.~Cheng and S.~Redner, ``Kinetics of fragmentation,'' {\em Journal of Physics
  A: Mathematical and General}, vol.~23, no.~7, p.~1233, 1990.

\bibitem{krapivsky2003shattering}
P.~Krapivsky and E.~Ben-Naim, ``Shattering transitions in collision-induced
  fragmentation,'' {\em Physical Review E}, vol.~68, no.~2, p.~021102, 2003.

\bibitem{kostoglou2000study}
M.~Kostoglou and A.~Karabelas, ``A study of the nonlinear breakage equation:
  analytical and asymptotic solutions,'' {\em Journal of Physics A:
  Mathematical and General}, vol.~33, no.~6, p.~1221, 2000.

\bibitem{barik2020global}
P.~K. Barik and A.~K. Giri, ``Global classical solutions to the continuous
  coagulation equation with collisional breakage,'' {\em ZAMP}, vol.~71, no.~1,
  pp.~1--23, 2020.

\bibitem{giri2021existence}
A.~K. Giri and P.~Lauren{\c{c}}ot, ``Existence and nonexistence for the
  collision-induced breakage equation,'' {\em SIAM Journal on Mathematical
  Analysis}, vol.~53, no.~4, pp.~4605--4636, 2021.

\bibitem{giri2021weak}
A.~K. Giri and P.~Lauren{\c{c}}ot, ``Weak solutions to the collision-induced
  breakage equation with dominating coagulation,'' {\em J. Differential
  Equations}, vol.~280, pp.~690--729, 2021.

\bibitem{bariwal2024non}
S.~K. Bariwal, S.~Hussain, and R.~Kumar, ``Non-linear collision-induced
  breakage equation: Finite volume and semi-analytical methods,'' {\em Acta
  Applicandae Mathematicae}, 2024.

\bibitem{hussain2024collisional}
S.~Hussain, S.~K. Bariwal, and R.~Kumar, ``Collisional breakage population
  balance equation: An analytical approach,'' {\em Journal of Mathematical
  Analysis and Applications}, p.~128697, 2024.

\bibitem{paul2023moments}
J.~Paul, A.~Das, and J.~Kumar, ``Moments preserving finite volume
  approximations for the non-linear collisional fragmentation model,'' {\em
  Applied Mathematics and Computation}, vol.~436, p.~127494, 2023.

\bibitem{laurenccot2002continuous}
P.~Lauren{\c{c}}ot and S.~Mischler, ``The continuous coagulation-fragmentation
  equatons with diffusion,'' {\em Archive for Rational Mechanics and Analysis},
  vol.~162, no.~1, pp.~45--99, 2002.

\bibitem{laurenccot2014weak}
P.~Lauren{\c{c}}ot, ``Weak compactness techniques and coagulation equations,''
  in {\em Evolutionary equations with applications in natural sciences},
  pp.~199--253, Springer, 2014.

\end{thebibliography}
\bibliographystyle{ieeetr}
				\end{document}